\newcommand{\im}{\operatorname{im}}
\newcommand{\zz}{\mathbb{Z}}
\newcommand{\ff}{\mathbb{F}}
\newtheorem{lemma}{Lemma}[section]
\newtheorem{proposition}{Proposition}[section]
\newtheorem{theorem}{Theorem}
\newtheorem{lettertheorem}{Theorem}
\newtheorem{definition}{Definition}
\newtheorem{example}{Example}[section]
\newtheorem{remark}{Remark}[section]
\newtheorem{corollary}{Corollary}[section]
\newenvironment{proof}{\vspace*{2ex}\noindent {\em Proof:}
}{\hfill $\diamond$ \\[2ex]}
\providecommand\@dotsep{5}
\renewcommand{\listoftodos}[1][\@todonotes@todolistname]{%
  \@starttoc{tdo}{#1}}
\title{Gabriel's Theorem for Locally Finite-Dimensional Representations of Infinite Quivers}
\author{Nathaniel Gallup and Stephen Sawin}
\date{\today}
\begin{document}

\maketitle

\abstract{We prove a version of Gabriel's theorem for locally finite-dimensional representations of infinite quivers. Specifically, we show that if $\Omega$ is any connected quiver, the category of locally finite-dimensional representations of $\Omega$ has unique representation type (meaning no two indecomposable representations have the same dimension vector) if and only if the underlying graph of $\Omega$ is a generalized ADE Dynkin diagram (i.e. one of $A_n, D_n, E_6, E_7, E_8, A_{\infty}, A_{\infty , \infty}$ or $D_\infty$). This result is companion to earlier work of the authors generalizing Gabriel's theorem to infinite quivers with different conditions.}

\section{Introduction}

In \cite{gabriel1972}, Gabriel showed that a quiver $\Omega$ has finite representation type (meaning the abelian category $\text{rep}(\Omega)$ of finite-dimensional representations of $\Omega$ has only finitely many indecomposable objects) if and only if the underlying graph of $\Omega$ is an ADE Dynkin diagram. He showed moreover that dimension vectors give a bijection between the set of indecomposable representations of $\Omega$ and the positive roots of $\Omega$, (i.e. the integer-valued functions on the vertex set of $\Omega$ which have Euler-Tits form equal to $1$). One consequence of this is that $\Omega$ has \emph{unique representation type}, meaning no two indecomposable representations can have the same dimension vector. In fact it is a consequence of Gabriel's theorem that for finite quivers $\Omega$, this condition is equivalent to $\Omega$ having finite representation type. Note also that for any quiver $\Omega$, $\text{rep}(\Omega)$ is always infinite Krull-Schmidt (meaning every representation is a direct sum of indecomposable representations). This was proved for several classes of quivers, including $A_{\infty , \infty}$ quivers, in \cite{bautista-liu-paquette2011} and for any quiver in \cite{botnam2017zigzag} and \cite{botnam2020persistence}. 

In \cite{gallup2023gabriel} we gave a version of Gabriel's theorem for infinite quivers. Precisely, we showed that the category $\text{Rep}(\Omega)$ of \emph{all} (possibly infinite-dimensional) representations of $\Omega$ has unique representation type and is infinite Krull-Schmidt (meaning every representation is a direct sum of possibly infinitely many indecomposable representations) if and only if $\Omega$ is an eventually outward generalized ADE Dynkin quiver (see Figure \ref{fig: generalized ade quivers}) and moreover in this case dimension vectors give a bijection between the set of indecomposable representations and the positive roots of $\Omega$, defined relative to an infinite Euler-Tits form. 

\begin{figure}\label{fig: generalized ade quivers} 
\begin{tikzcd}[arrows=-]
   A_n \quad = \qquad    \bullet \arrow[r] &   \bullet \arrow[r]  & \quad \cdots\quad \arrow[r]& \bullet \arrow[r]&  \bullet
 \end{tikzcd}
 
 \begin{tikzcd}[arrows=-]
 & & & & & \bullet \arrow[ld] \\
 D_n \quad = \qquad \bullet \arrow[r] & \bullet \arrow[r] &\quad \cdots \arrow[r] &\bullet \arrow[r] &
  \bullet  & \\
 & & & & &\bullet \arrow[lu]
\end{tikzcd}

 \begin{tikzcd}[arrows=-]
 & & \bullet \arrow[d] & & & \\
 E_m \quad = \qquad  \bullet \arrow[r] & \bullet \arrow[r] & \bullet \arrow[r] &  \bullet \arrow[r] & \quad \cdots \quad \arrow[r] &
  \bullet  
\end{tikzcd}

 \begin{tikzcd}[arrows=-]
   A_\infty\quad  = \qquad  \bullet \arrow[r] &   \bullet \arrow[r]  &  \bullet \arrow[r]  &\quad \cdots 
 \end{tikzcd}
 
 \begin{tikzcd}[arrows=-]
   A_{\infty,\infty} \quad  = \qquad  \cdots \quad \arrow[r] & \bullet \arrow[r]  &   \bullet \arrow[r] & \bullet \arrow[r] & \quad \cdots 
 \end{tikzcd}

 \begin{tikzcd}[arrows=-]
 & & & & \bullet \arrow[ld] \\
 D_\infty \quad = \qquad \cdots \quad \arrow[r] & \bullet \arrow[r] & \bullet \arrow[r] &
  \bullet  & \\
 & & & & \bullet \arrow[lu]
\end{tikzcd}
\caption{The Generalized ADE Dynkin Quivers, where $m = 6, 7 , 8$}
\end{figure}
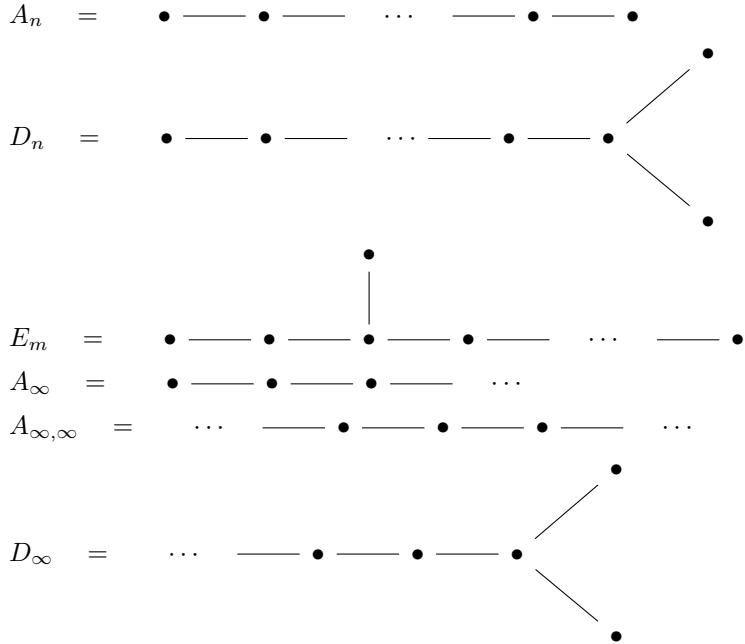

In fact we show that any quiver which is not eventually outward has a (necessarily infinite-dimensional) representation which is not the direct sum of indecomposable representations. The question remains, what happens in the category $\text{rep}(\Omega)$ of locally finite-dimensional representations of an infinite quiver? 

Indeed in \cite[Section 9]{gallup2023gabriel}, we conjectured that for \emph{any} (not necessarily eventually outward) quiver $\Omega$, $\text{rep}(\Omega)$ has unique representation type if and only if $\Omega$ is a generalized ADE Dynkin quiver. In this paper we prove our conjecture, stated formally below. 

\begin{lettertheorem}[Locally Finite-Dimensional Infinite Gabriel's Thm.]
Let $\Omega$ be a connected quiver. The category $\operatorname{rep}(\Omega)$ has unique representation type if and only if $\Omega$ is a generalized ADE Dynkin quiver (see Figure \ref{fig: generalized ade quivers}) and in this case, taking dimension vectors gives a bijection between the set of isomorphism classes of indecomposable representations and the positive roots of $\Omega$ (as defined in \cite{gallup2023gabriel}). 
\end{lettertheorem}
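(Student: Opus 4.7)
The plan is to prove both implications by reducing to the classical representation theory of finite quivers: the forward direction via exhaustion of $\Omega$ by finite Dynkin subquivers, and the backward direction by embedding a finite extended-Dynkin subquiver whose indecomposables can be extended by zero. Throughout, I would use that every object of $\operatorname{rep}(\Omega)$ admits a Krull-Schmidt decomposition (by the cited results of Bautista-Liu-Paquette and Botnan), so unique representation type reduces to the statement that dimension vectors separate isomorphism classes of indecomposables.

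For the ``$\Leftarrow$'' direction I would begin with the following combinatorial observation: if the underlying graph of a connected quiver $\Omega$ is not a generalized ADE Dynkin diagram of Figure \ref{fig: generalized ade quivers}, then $\Omega$ contains a finite connected subquiver $\Omega'$ whose underlying graph is an extended Dynkin diagram $\tilde{A}_n$, $\tilde{D}_n$, $\tilde{E}_6$, $\tilde{E}_7$, or $\tilde{E}_8$. This is a case analysis over the ``forbidden'' configuration that must occur -- a cycle, a vertex of degree $\geq 4$, two branch vertices of degree $\geq 3$, a too-long arm at a trivalent branch, or three semi-infinite arms -- each of which contains some extended Dynkin diagram as a subgraph. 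On $\Omega'$, classical tame-type results provide a one-parameter family of pairwise non-isomorphic indecomposable finite-dimensional representations sharing the minimal imaginary root $\delta$ as their common dimension vector. Extending each by zero outside $\Omega'$ yields $\widetilde{V} \in \operatorname{rep}(\Omega)$, and $\operatorname{End}_\Omega(\widetilde{V}) \cong \operatorname{End}_{\Omega'}(V)$ is a local ring, so $\widetilde{V}$ remains indecomposable. Two such extensions share the same dimension vector, contradicting unique representation type.

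For the ``$\Rightarrow$'' direction, the finite ADE cases are immediate from classical Gabriel. For the infinite quivers $A_\infty$, $A_{\infty,\infty}$, and $D_\infty$, I would exhaust $\Omega$ by an increasing chain of finite connected Dynkin subquivers $\Omega_1 \subset \Omega_2 \subset \cdots$ with $\bigcup_n \Omega_n = \Omega$. For an indecomposable $V \in \operatorname{rep}(\Omega)$, each restriction $V|_{\Omega_n}$ is finite-dimensional and, by finite Gabriel, decomposes as a sum of indecomposables labeled by positive roots of $\Omega_n$. The central technical step is to prove that these local decompositions stabilize coherently: as $n$ grows, each summand in $\Omega_n$ extends in a compatible way to a summand in $\Omega_{n+1}$, and indecomposability of $V$ then forces a single ``limit'' summand whose dimension vector is a positive root of $\Omega$ in the sense of \cite{gallup2023gabriel}. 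This simultaneously establishes the dimension-vector bijection with positive roots. I expect the main obstacle to be the $D_\infty$ case, where restrictions produce indecomposables of shapes more intricate than plain intervals and the combinatorial bookkeeping of how they extend is delicate; this is compounded by the absence of reflection functors for infinite quivers, which prevents reducing non-eventually-outward orientations (such as $A_{\infty,\infty}$ oriented unidirectionally, or certain orientations of $D_\infty$) to the eventually outward cases already handled in \cite{gallup2023gabriel}.
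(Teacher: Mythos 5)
Your treatment of the direction ``not generalized ADE $\Rightarrow$ not unique representation type'' is essentially the paper's argument: locate a finite non-Dynkin connected subquiver, take two non-isomorphic finite-dimensional indecomposables with the same dimension vector, and extend by zero (your extra precision about extended Dynkin subgraphs and the imaginary root $\delta$, and the endomorphism-ring justification that zero-extension preserves indecomposability, are fine). You have swapped the $\Rightarrow$/$\Leftarrow$ labels, but the content is clear.

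The other direction, however, contains a genuine gap, and it sits exactly where the difficulty of the theorem lives. Your plan is to exhaust $A_\infty$, $A_{\infty,\infty}$, or $D_\infty$ by finite Dynkin subquivers $\Omega_n$, decompose each restriction $V|_{\Omega_n}$ by finite Gabriel, and assert that these local decompositions ``stabilize coherently.'' But a Krull--Schmidt decomposition of $V|_{\Omega_{n+1}}$ need not restrict to, or refine, a chosen decomposition of $V|_{\Omega_n}$: the summands are determined only up to isomorphism, not as subspaces, and for a non-eventually-outward orientation there is no canonical compatible choice --- indeed, limits of naive choices can fail to span, which is precisely the phenomenon isolated in Examples \ref{ex: closed under intersection nonspanning almost gradation} and \ref{ex: finite dim nonspanning almost gradation}. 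You identify this coherence claim as the ``central technical step'' and the ``main obstacle,'' but you do not supply an argument for it, so the hard implication is not proved. The paper takes a different route specifically to avoid this: for $A_{\infty,\infty}$ it constructs at each vertex two canonical linear filtrations $L^\ell$, $R^\ell$ by transporting the zero and full subspaces from the left and right, propagates a single almost gradation of $L^0\cap R^0$ along the arrows using Lemma \ref{lem: image and inverse image complements} and Corollary \ref{cor: transport complements}, and shows that local finite-dimensionality forces these filtrations to be closed under intersection (Corollary \ref{cor: L and R closed under cap}) so that the resulting almost gradations span (Corollary \ref{cor: closed under cap implies spanning ag}). This decomposes every locally finite-dimensional representation of $A_{\infty,\infty}$ into thin interval summands; from that one deduces that along any tail of a general quiver the maps of an indecomposable are injective or surjective with non-increasing dimension (Corollaries \ref{cor: maps on tails must be inj or surj} and \ref{cor:dim does not increase on tails ind}), hence the indecomposable is FLEI (Theorem \ref{thm: finitely branching tree quiver indecomposable implies flei}), and the classification and root bijection are then quoted from the earlier paper's analysis of FLEI indecomposables. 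To salvage your exhaustion strategy you would need an actual proof of the coherence/stabilization step (e.g.\ via functorial filtrations or an inverse-limit argument); as written it is a statement of intent rather than a proof.
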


In Section \ref{sec: background} we provide the background on quivers and their representations that will be used in this paper. Then in Section \ref{sec: decomp of a infinity}, we show that any locally finite-dimensional representation of any $A_{\infty, \infty}$ quiver (regardless of the orientation of the arrows) is a direct sum of indecomposable representations. 

As a consequence of this, in Section \ref{sec: tails} we show that if $V$ is a locally finite-dimensional indecomposable representation of a quiver, then the dimension of $V$ cannot increase along a tail (a subquiver isomorphic to $A_{\infty}$) and as a consequence, such a representation must be FLEI (meaning all but finitely many of the arrows map to isomorphisms) if $\Omega$ is a finitely branching tree quiver. Finally in Section \ref{sec: proof} we prove the conjecture. 


\section{Background}\label{sec: background}

In this section we give the necessary background on quivers and their representations. 

\subsection{Quivers}

A quiver $\Omega=(\Omega_0, \Omega_1)$ is a set $\Omega_0$ of \emph{vertices} and a set $\Omega_1$ of ordered pairs of vertices called \emph{arrows}. If $a=(x,y) \in \Omega_1$ we say $s(a)=x$ is the source of $a$ and $t(a)=y$ is the target of $a$. In this case we sometimes say that $a$ points $x \to y$ or write $a: x \to y$. If either $a = (x , y)$ or $a = (y , x)$ then we say that $a$ is an arrow \emph{between $x$ and $y$}.

A \emph{journey} $p$ is a quiver morphism from $A_n$ or $A_\infty$ to $\Omega$ which is injective on vertices. It is called \emph{finite} if its domain is $A_n$ and \emph{infinite} if its domain is $A_\infty$. We call the image of the left-most vertex the \emph{source} of the journey, denoted by $s(p)$, and if the journey is finite, we call the image of the right-most vertex the \emph{target}, denoted by $t(p)$. A journey is called a \emph{cycle} if its domain is $A_n$ for $n \geq 2$ and $s(p) = t(p)$. We say that $\Omega$ is a \emph{tree} if it contains no cycles. We say that $\Omega$ is \emph{finitely-branching} if it is union of finitely many journeys. It is called \emph{eventually outward} if every journey contains at most finitely-many arrows that point towards the source of those journeys.

A \emph{tail} of a quiver is an infinite journey with vertices $x_0, x_1, \ldots$ such that in $\Omega$ there is exactly one arrow between $x_i$ and $x_{i + 1}$ and no arrows between $x_i$ and $x_j$ for $j \neq i + 1, i - 1$. An arrow on a tail of a quiver, say between $x_i$ and $x_{i + 1}$ \emph{points in the direction of the tail} if it points $x_i \to x_{i + 1}$ and it \emph{points in the direction opposite the tail} if it points $x_{i + 1} \to x_i$. 

\subsection{Representations of Quivers}

We fix an arbitrary field $\ff$ throughout the paper. A \emph{representation} $V$ of a quiver $\Omega$ is an assignment to every $x \in \Omega_0$ an $\ff$ vector space $V(x)$ and to every arrow $a : x \to y$ a linear transformation $V(a) : V(x) \to V(y)$. A \emph{subrepresentation} of $V$ is an assignment of a subspace $W(x)$ to every $x \in \Omega_0$ such that for all arrows $a: x \to y$, $V(a)(W(x)) \subseteq W(y)$, which implies $W$ is itself a representation of $\Omega$ with maps obtained by restricting those of $V$. If $\{ W_\alpha \mid \alpha \in A \}$ is a family of subrepresentations of $V$ we say that $V = \bigoplus_{\alpha \in A} W_\alpha$ if for all $x \in \Omega_0$ we have $V(x) = \bigoplus_{\alpha \in A} W_\alpha(x)$.

We say that $V$ is \emph{locally finite-dimensional} if $V(x)$ is a finite-dimensional vector space for all $x \in \Omega_0$.

\subsection{The Transport of a Subspace}

If we have vector spaces $V$ and $W$ and a linear map either $T: V \to W$ or $T: W \to V$ (in which case we say $T$ is a map \emph{between $V$ and $W$}) and $A \subseteq V$, the \emph{transport $A_T$} of $A$ to $W$ along $T$ is $T(A)$ in the first case and $T^{-1}(A)$ in the second. If $p$ is a journey in a quiver $\Omega$ with vertices $x_0, x_1, \ldots$, $V$ is a representation of $\Omega$, and $U \subseteq V(x_0)$ is a subspace, then the \emph{transport of $U$ along $p$ to $x_n$} is obtained by transporting $U$ along each of the maps of the journey to $x_n$. 

If $U$ and $W$ are vector spaces, $A \subseteq U$ and $C \subseteq W$ are subspaces, and $T$ is a linear map between $U$ and $W$, we say that $T$ \emph{is a map between $A$ and $C$} if in the case of $T: U \to W$, we have $T(A) \subseteq C$ and in the case of $T: W \to U$ we have $T(C) \subseteq A$. In this language, if $\Omega$ is a quiver, $V$ is a representation of $\Omega$ and for every $x \in \Omega_0$ we have a subspace $W(x) \subseteq V(x)$, then $W$ is a subrepresentation of $V$ if and only if for every arrow $a \in \Omega_1$ with endpoints $x, y \in \Omega_0$, $V(a)$ is a map between $W(x)$ and $W(y)$.

A \emph{strand} of a representation $V$ of a quiver $\Omega$ (without multiple edges) is a sequence $(v_0 , v_1, \ldots )$ such that there exists a joruney $x_0, x_1, \ldots$ in $\Omega$ with $v_i \in V(x_i)$ and $V(a)(v_i) = v_{i + 1}$ if there is an arrow $a : x_i \to x_{i + 1}$ in the journey and $V(a)(v_{i + 1}) = v_{i}$ if there is an arrow $a : x_{i + 1} \to x_i$ in the journey.

\subsection{Poset Filtrations and Almost Gradations}
A \emph{poset filtration} of an $R$ module $M$ consists of a partially ordered set $(P , \leq )$, and a function $F: P \to \text{Sub}(M)$ which is order-preserving, meaning $p \leq q$ implies $F_p \subseteq F_q$. Here $F_p$ denotes the image of $p \in P$ under $F$. If $P$ happens to be a totally ordered set then we say that the filtration is \emph{linear}. In this paper we will assume that all filtrations $(F , P)$ of $M$ have the property that there exists some $p \in P$ with $F_p = M$.

An \emph{almost gradation} of a poset filtration $(F , P)$ of an $R$ module $M$ is a function $C: P  \to \text{Sub}(M)$ satisfying the condition that for all $p \in P$, $F_p = F_{< p} \oplus C^p$, where we define $F_{<p} = \sum_{q < p} F_q$. 

An almost gradation $C$ of a poset filtration $(F , P)$ of an $R$ module $M$ is \emph{independent} if the family of submodules $\{C_p \mid p \in P\}$ is independent in the sense that whenever we have $c_{p_1} + \ldots + c_{p_n} = 0$ for $c_{p_i} \in C_{p_i}$ and $p_1, \ldots, p_n$ distinct elements of $P$, then $c_{p_i} = 0$ for all $1 \leq i \leq n$. Furthermore we say that $C$ \emph{spans} if for all $p \in P$, $F_p = \sum_{q \leq p} C_p$. Note that since $M = F_p$ for some $p$, in particular this implies that $M = \bigoplus_{p \in P} C_p$. 

Recall that if $(P ,  \leq_P)$ and $(Q , \leq_Q)$ are two partially ordered sets, their \emph{product} $(P ,  \leq_P) \times (Q , \leq_Q)$ is the partially ordered set $(P \times Q , \leq_{P \times Q})$ where $(p , q) \leq_{P \times Q} (p' , q')$ if and only if $p \leq_P p'$ and $q \leq_Q q'$. 

We define the \emph{intersection} of two poset filtrations $(E , P)$, and $(F , Q)$ of an $R$-module $M$, to be the poset filtration $(E \cap F , P \times Q)$ (where $P \times Q$ is given the product order) defined by $[E \cap F]_{(p , q)} = E_p \cap F_q$. One can easily check that $E \cap F : P \times Q \to \text{Sub}(M)$ is an order-preserving function, so this does indeed define a poset filtration of $M$. 

In \cite{gallup2024decompositions} Propositions 6.1 and 6.2 we proved the following result. 

\begin{proposition}\label{prop: almost gradations are independent}
Let $I$ and $J$ be totally ordered sets and $(E, I)$, $(F, J)$ two linear filtrations of an $R$-module $M$. 
    \begin{enumerate}
        \item Every almost gradation of $(E, I)$ is independent. 
        \item For any $i \in I$ and $j \in J$, we have that 
        \begin{equation}
    [E \cap F]_{< (i , j)} = E_i \cap F_{< j} + E_{< i} \cap F_j. \label{eq: intersection of linear less than}
  \end{equation} 
  and every almost gradation of $(E \cap F, I \times J)$ is independent. 
    \end{enumerate}
\end{proposition}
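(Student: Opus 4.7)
The plan is to prove both parts by a ``maximum element'' argument: in any finite vanishing relation among elements of an almost gradation, use the total order(s) to pick a largest index appearing, and force the corresponding coefficient into the strictly-smaller filtration piece, so that the direct-sum condition kills it. Iteration then yields independence.

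For part (1), suppose $c_{i_1}+\cdots+c_{i_n}=0$ with the $i_k$ distinct and $c_{i_k}\in C_{i_k}$. Since $I$ is totally ordered I reindex so that $i_1<\cdots<i_n$. Then each $c_{i_k}$ with $k<n$ lies in $E_{i_k}\subseteq E_{<i_n}$, so $-c_{i_n}\in E_{<i_n}$; the almost gradation property $E_{i_n}=E_{<i_n}\oplus C_{i_n}$ then forces $c_{i_n}=0$, and iteration on $n$ handles the rest.

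For the equation $[E\cap F]_{<(i,j)}=E_i\cap F_{<j}+E_{<i}\cap F_j$, I unpack the product order: $(i',j')<(i,j)$ exactly when $i'\leq i$ and $j'\leq j$ with at least one inequality strict. Thus each summand $E_{i'}\cap F_{j'}$ of the left-hand side satisfies $E_{i'}\cap F_{j'}\subseteq E_{<i}\cap F_j$ if $i'<i$ and $E_{i'}\cap F_{j'}\subseteq E_i\cap F_{<j}$ if $j'<j$, giving one containment; the reverse containment is immediate from $E_{i'}\subseteq E_i$ and $F_{j'}\subseteq F_j$.

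For the independence in part (2), suppose $\sum_{k=1}^{n}c_{p_k}=0$ with $p_k=(i_k,j_k)$ distinct and $c_{p_k}\in C_{p_k}$. Using totality of $J$, set $j^{*}=\max_k j_k$, and among indices with $j_k=j^{*}$ choose $p^{*}=(i^{*},j^{*})$ of maximal $i$-coordinate (using totality of $I$). The main obstacle, and the reason this does not reduce to two applications of part (1), is that a term $c_{p_k}$ with $j_k<j^{*}$ can have $i_k>i^{*}$, so it is not automatically contained in $E_{i^{*}}$ and cannot be directly absorbed. My plan is to partition the other indices into three groups: (a) $j_k=j^{*}$ and $i_k<i^{*}$; (b) $j_k<j^{*}$ and $i_k\leq i^{*}$; (c) $j_k<j^{*}$ and $i_k>i^{*}$. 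Group (a) contributes a sum in $E_{<i^{*}}\cap F_{j^{*}}$, and group (b) a sum in $E_{i^{*}}\cap F_{<j^{*}}$. For group (c), since $c_{p^{*}}$ and the sums from (a) and (b) all lie in $E_{i^{*}}$, the relation forces the group (c) sum to lie in $E_{i^{*}}$ as well; it plainly lies in $F_{<j^{*}}$, hence in $E_{i^{*}}\cap F_{<j^{*}}$. By the equation proved above, $-c_{p^{*}}$ then lies in $E_{<i^{*}}\cap F_{j^{*}}+E_{i^{*}}\cap F_{<j^{*}}=[E\cap F]_{<p^{*}}$, and the almost gradation property yields $c_{p^{*}}=0$; induction on $n$ finishes.
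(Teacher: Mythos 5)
Your argument is essentially correct in structure, and the strategy (pick the term with the largest index, push everything else into the strictly-smaller filtration piece, and let the direct-sum condition kill that term) is the natural one; note that this paper does not prove the proposition itself but imports it from Propositions 6.1 and 6.2 of \cite{gallup2024decompositions}, so there is no in-paper proof to compare against. Part (1) is fine, and in part (2) your treatment of the third group of indices ($j_k<j^{*}$, $i_k>i^{*}$) --- forcing the sum of those terms into $E_{i^{*}}$ because every other term of the relation already lies there, and into $F_{<j^{*}}$ directly --- is exactly the point where the non-totality of the product order must be confronted, and you handle it correctly.

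The one place you are too quick is the containment $E_i\cap F_{<j}+E_{<i}\cap F_j\subseteq[E\cap F]_{<(i,j)}$, which you dismiss as immediate from $E_{i'}\subseteq E_i$ and $F_{j'}\subseteq F_j$. Those inclusions do not give it: $E_i\cap F_{<j}=E_i\cap\sum_{j'<j}F_{j'}$, and intersection does not distribute over sums of submodules in general, so for arbitrary poset filtrations this containment can fail. What saves it is linearity: since $J$ is totally ordered, $\{F_{j'}\mid j'<j\}$ is a chain, so $F_{<j}=\bigcup_{j'<j}F_{j'}$ (any finite sum of chain members lies in the largest of them), hence every element of $E_i\cap F_{<j}$ lies in $E_i\cap F_{j'}$ for a single $j'<j$, and $(i,j')<(i,j)$ puts it in $[E\cap F]_{<(i,j)}$; the same argument handles $E_{<i}\cap F_j$. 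This is worth spelling out, because it is precisely the direction of the displayed equation that your independence argument uses (to conclude $-c_{p^{*}}\in[E\cap F]_{<p^{*}}$ and then $c_{p^{*}}=0$), and it is where the hypothesis that $E$ and $F$ are \emph{linear} filtrations, rather than arbitrary poset filtrations, genuinely enters.
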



\section{Decompositions of Locally Finite-dimensional Representations of $A_{\infty, \infty}$ Quivers}\label{sec: decomp of a infinity}

Let $V$ be any representation of an $A_{\infty, \infty}$ quiver $\Omega$, and choosing a zero vertex and positive direction arbitrarily, label the vertices and arrows of this quiver as below.

\begin{center}
\begin{tikzcd}[arrows=-]
   \Omega \quad  = \quad  \cdots \quad \arrow[r] & y_{-1} \arrow[r, "a_{-1}"]  &   y_0 \arrow[r, "a_0"] & y_1 \arrow[r, "a_1"] & y_2 \arrow[r] & \quad \cdots 
\end{tikzcd}
\end{center}

Define a totally ordered set $I$ as follows: 
    \begin{equation*}
        m_0 \leq m_1 \leq \ldots \leq n_{-\infty} \leq \ldots \leq n_{-1} \leq n_0
    \end{equation*}

For any vertex $y_\ell$ of $\Omega$ we define two functions $L^\ell, R^\ell : I \to \text{Sub}(V(y_\ell))$ as follows. For $j \in \mathbb{Z}_{\geq 0}$, we let $L^\ell_{m_j}$ (resp. $R^\ell_{m_j}$) be the transport of the zero subspace in $V(y_{\ell - j})$ (resp. the zero subspace in $V(y_{\ell + j})$) to $V(y_\ell)$ and for $j \in \mathbb{Z}_{\leq 0}$ we define $L^\ell_{n_j}$ (resp. $R^\ell_{n_j}$) be the transport of $V(y_{\ell + j})$ (resp. $V(y_{\ell - j})$) to $V(y_\ell)$. Furthermore, we define $L^\ell_{n_{-\infty}}$ (resp. $R^\ell_{n_{-\infty}}$) to be the set of all $v \in V(y_\ell)$ such that there exists a strand $(v = v_0 , v_1, \ldots )$ of the journey $y_\ell, y_{\ell - 1}, y_{\ell - 2}, \ldots$ (respectively of the journey $y_\ell, y_{\ell + 1}, y_{\ell + 2} \ldots$).

\begin{proposition}
    $L^\ell$ and $R^\ell$ are linear filtrations of $V(y_\ell)$. 
\end{proposition}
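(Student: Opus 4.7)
The plan is to verify the two conditions a linear filtration must satisfy: that $I$ is totally ordered (which is given by the definition of $I$) with $L^\ell$ (resp.\ $R^\ell$) order-preserving, and that there is some $p \in I$ with $L^\ell_p = V(y_\ell)$. The latter is immediate: taking $p = n_0$, the filtration value $L^\ell_{n_0}$ is the transport of $V(y_{\ell+0}) = V(y_\ell)$ to $V(y_\ell)$ along the empty journey, hence equals $V(y_\ell)$ itself, and similarly $R^\ell_{n_0} = V(y_\ell)$.

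For the order-preservation of $L^\ell$, since $I$ is totally ordered and inclusion is transitive, it suffices to check the generating (adjacent) relations of $I$, along with the limit relation $m_j \leq n_{-\infty}$. I would treat three cases. First, for $i \in \zz_{\geq 0}$, I would show $L^\ell_{m_i} \subseteq L^\ell_{m_{i+1}}$: the key observation is that both image and preimage along a linear map preserve inclusion of subspaces, so transport is monotone. Viewing $L^\ell_{m_{i+1}}$ as obtained by first transporting $\{0\} \subseteq V(y_{\ell - i - 1})$ to $V(y_{\ell - i})$, producing a subspace $S \supseteq \{0\}$, and then to $V(y_\ell)$, monotonicity of transport yields the inclusion. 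The dual argument $L^\ell_{n_{j-1}} \subseteq L^\ell_{n_j}$ for $j \in \zz_{\leq 0}$ follows analogously, since $V(y_{\ell+j-1})$ transports into some $S \subseteq V(y_{\ell+j})$ before being transported onward.

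Second, I would handle the two relations involving $n_{-\infty}$ using the strand interpretation of transport. For $L^\ell_{m_j} \subseteq L^\ell_{n_{-\infty}}$: an element $v \in L^\ell_{m_j}$ lies at the end of a finite strand $(v_0 = 0, v_1, \ldots, v_j = v)$ along the journey from $y_{\ell - j}$ to $y_\ell$; reversing it gives a strand $(v, v_1', \ldots, v_j' = 0)$ from $y_\ell$ leftward to $y_{\ell - j}$, which extends to an infinite leftward strand by setting all subsequent entries to $0$ (compatibility holds because linear maps carry $0$ to $0$). Conversely, for $L^\ell_{n_{-\infty}} \subseteq L^\ell_{n_j}$ with $j \leq 0$: an infinite leftward strand starting at $v$ truncates to a finite strand whose initial term lies in $V(y_{\ell+j})$, which witnesses $v$ as the endpoint of the transport of an element of $V(y_{\ell+j})$, hence $v \in L^\ell_{n_j}$.

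Finally, the statement for $R^\ell$ follows by the same argument, swapping left and right throughout. I do not expect a significant obstacle; the only subtlety is that the arrows $a_k$ along the journey may point in either direction, but the strand formalism and the monotonicity of transport (both for image and preimage) absorb this uniformly. The strand interpretation is the only piece essential for the $n_{-\infty}$ comparisons, and there both extension-by-zero and truncation are entirely direct.
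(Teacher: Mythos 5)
Your proposal is correct and follows essentially the same route as the paper's proof: monotonicity of transport for the adjacent comparisons $m_i \leq m_{i+1}$ and $n_{j-1} \leq n_j$, extension of a finite strand by zeros for $m_j \leq n_{-\infty}$, and truncation of an infinite strand for $n_{-\infty} \leq n_j$, with $R^\ell$ handled by symmetry. No gaps.
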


\begin{proof}
We only show that $L$ is (the case of $R$ being similar). First of all, $L^\ell_{n_0} = V(y_\ell)$ by definition. Then for any $\ell \in \zz$, we always have $0 = L^\ell_{m_0} \subseteq L^\ell_{m_1}$ and $L^\ell_{n_{-1}} \subseteq L^\ell_{n_0} = V(y_\ell)$. Then because taking the image and inverse image of subspaces preserves inclusions, the transport also preserves inclusions. Therefore because $L^\ell_{m_i}$ and $L^\ell_{m_{i + 1}}$ are the transports of $L^{\ell - i}_{m_0} \subseteq L^{\ell - i}_{m_1}$ respectively along the journey $y_{\ell - i}, \ldots, y_\ell$ it follows that $L^\ell_{m_i} \subseteq L^\ell_{m_{i + 1}}$. A similar result shows that $L^\ell_{n_{j - 1}} \subseteq L^\ell_{n_{j}}$ for all $j \in \zz_{\leq 0}$. Finally if $v \in L^\ell_{n_{-\infty}}$ then by definition there exists a strand $(v = v_0 , v_1, \ldots )$ of the journey $y_\ell, y_{\ell - 1}, \ldots$, so given any $j \in \zz_{\leq 0}$, $v_{\ell + j} \in V(y_{\ell + j})$, and hence $v$ is in the transport of $V(y_{\ell + j})$ to $V(y_\ell)$ which is $L^\ell_{n_j}$, i.e. $L^\ell_{n_{-\infty}} \subseteq L^\ell_{n_j}$. On the other hand, given $v \in L^\ell_{m_i}$ for $i \in \zz_{\geq 0}$, by definition $v$ is in the transport of $0 \in V(y_{\ell - i})$ to $V(y_\ell)$ along the path $y_{\ell - i}, \ldots, y_\ell$, which implies that there exist $v_1, \ldots, v_{\ell - i + 1}$ such that $(v = v_0, v_1, \ldots, v_{\ell - i + 1}, 0 , 0 , \ldots)$ is a strand of the journey $y_\ell, y_{\ell - 1}, \ldots$, so $v \in L^\ell_{n_{-\infty}}$. 
\end{proof}

Our goal is to show that if $C^0$ is any almost gradation of $R^0 \cap L^0$, then we can build almost gradations of the poset filtrations $L^\ell \cap R^\ell$ for each $\ell \in \mathbb{Z}$ which are compatible with $C^0$ in the sense that we make precise in Proposition \ref{prop: Ds are subreps}. To this end, the following lemma and corollary describe how certain direct sum complements behave under subspace transport. 

\begin{lemma}\label{lem: image and inverse image complements}
    Let $T: V \to W$ be a linear map of vector spaces and suppose $A \subseteq B \subseteq V$ and $C \subseteq D \subseteq W$ are subspaces. 
    \begin{enumerate}
        \item If $Y \subseteq W$ is such that $T(B) \cap D = [T(A) \cap D + T(B) \cap C] \oplus Y$ then there exists $X \subseteq V$ such that $T$ maps $X$ isomorphically onto $Y$, and $B \cap T^{-1}(D) = [A \cap T^{-1}(D) + B \cap T^{-1}(C)] \oplus X$.

        \item If $X \subseteq V$ is such that $B \cap T^{-1}(D) = [A \cap T^{-1}(D) + B \cap T^{-1}(C)] \oplus X$ then there exists $Y \subseteq W$ such that $T$ maps $X$ isomorphically onto $Y$, and $T(B) \cap D = [T(A) \cap D + T(B) \cap C] \oplus Y$.
    \end{enumerate}
\end{lemma}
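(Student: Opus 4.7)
The two parts are essentially symmetric and both rely on the basic identity $T(A)\cap D = T(A \cap T^{-1}(D))$ (and its analogue for $B, C$), which says the ``image'' and ``inverse image'' ways of writing the same intersection actually agree; this is what lets one lift a decomposition of $T(v)$ in $W$ back to a decomposition of $v$ in $V$.

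For part (1), I would choose a basis $\{y_i\}_{i\in I}$ of $Y$ and, using the identity above, lift each $y_i$ to some $x_i \in B \cap T^{-1}(D)$. Setting $X := \operatorname{span}\{x_i\}$, injectivity of $T|_X$ is immediate from linear independence of the $y_i$, so $T$ restricts to an isomorphism $X \cong Y$. To realize the sum decomposition in $V$, take $v \in B \cap T^{-1}(D)$, decompose $T(v) \in T(B)\cap D$ via the given decomposition in $W$, and lift each summand through $T$: $a \in A \cap T^{-1}(D)$ for the first, $b' \in B \cap T^{-1}(C)$ for the second, and $x \in X$ for the $Y$-part. The residual error $k := v - a - b' - x$ lies in $\ker T$ and in $B$, hence in $B \cap T^{-1}(C)$ (because $0 \in C$); absorbing $k$ into $b'$ yields the desired decomposition of $v$. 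Directness is easy: if $x \in X \cap (A\cap T^{-1}(D) + B \cap T^{-1}(C))$, then $T(x) \in Y \cap (T(A)\cap D + T(B)\cap C) = 0$, and injectivity of $T|_X$ gives $x=0$.

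For part (2), I would simply set $Y := T(X)$. Injectivity of $T|_X$ comes for free: any element of $X \cap \ker T$ lies in $X \cap (B \cap T^{-1}(C)) = 0$. The sum decomposition in $W$ is obtained by lifting arbitrary $y \in T(B)\cap D$ to $v \in B \cap T^{-1}(D)$, decomposing $v$ using the hypothesis, and applying $T$. Directness in $W$ is the one genuinely delicate step: if $T(x) = T(a) + T(b')$ with $x \in X$, $a \in A \cap T^{-1}(D)$, $b' \in B \cap T^{-1}(C)$, then $k := x - a - b' \in \ker T \cap B \subseteq B \cap T^{-1}(C)$, so $x = a + (b' + k) + 0_X$ is a valid decomposition of $x$ in the direct sum for $V$; comparing with the trivial decomposition $x = 0 + 0 + x$ and invoking uniqueness forces $x = 0$, hence $T(x)=0$.

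I expect the main (minor) obstacle to be recognizing this last absorption-of-kernel move: the transport of a subspace under $T$ is not exact, and what makes the direct sum hypothesis survive the transport in either direction is precisely that any residual element of $\ker T$ which appears can always be absorbed into the $B \cap T^{-1}(C)$ summand (since $0 \in C$ and all relevant summands lie in $B$). Once this technique is identified, the rest is straightforward book-keeping across $T$.
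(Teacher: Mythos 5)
Your proof is correct and follows essentially the same route as the paper's: in (1) you lift a basis of $Y$ to elements of $B\cap T^{-1}(D)$, span to get $X$, and handle the spanning step by absorbing the residual kernel element into $B\cap T^{-1}(C)$; in (2) you take $Y=T(X)$ and use the same kernel-absorption trick for directness. The only cosmetic difference is that you deduce directness in (1) by pushing a general element forward to $W$ after first noting injectivity of $T|_X$, whereas the paper runs the analogous computation on a linear combination of the lifted basis vectors, obtaining injectivity as a by-product.
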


\begin{proof}

\begin{enumerate}
    \item Let $\{ y_i \mid i \in I \}$ be a basis for $Y$ and since $Y \subseteq T(B)$, for each $i \in I$ choose $x_i \in B$ with $T(x_i) = y_i$. Note that because $Y \subseteq D$, $x_i \in T^{-1}(D)$ as well. Let $X = \text{Span}(x_i \mid i \in I)$.

    First we show that $X$ and $A \cap T^{-1}(D) + B \cap T^{-1}(C)$ are independent. Suppose that we have $\sum_{i \in I} \alpha_i x_i + (a + b) = 0$ where $\alpha_i \in \mathbb{F}$, $a \in A \cap T^{-1}(D)$, and $b \in T^{-1}(C) \cap B$. Applying $T$ we obtain $\sum_{i \in I} \alpha_i y_i + T(a) + T(b) = 0$. Then $T(a) \in T(A) \cap D$ and $T(b) \in C \cap T(B)$, so by hypothesis we must have $\sum_{i \in I} \alpha_i y_i = T(a) + T(b) = 0$, hence $\alpha_i = 0$ for all $i$ by independence of $\{ y_i \mid i \in I \}$. Thus $\sum_{i \in I} \alpha_i x_i = 0$ which implies $a + b = 0$ as desired. Also note that incidentally we obtain that $\{ x_i \mid i \in I \}$ is independent, hence this set is a basis for $X$ which means $T$ maps $X$ isomorphically onto $Y$. 
     
    Now we show that $B \cap T^{-1}(D) = [A \cap T^{-1}(D) + T^{-1}(C) \cap B] + X$. Given any $z \in B \cap T^{-1}(D)$, $T(b) \in T(B) \cap D$, hence we can write $T(z) = \sum_{i \in I} \alpha_i y_i + T(a) + T(b)$ where $a \in A$, $b \in B$, and $T(a) \in T(A) \cap D$, $T(b) \in T(B) \cap C$. Then $a \in A \cap T^{-1}(D)$ and $b \in B \cap T^{-1}(C)$, hence we have $k := z - ( \sum_{i \in I} \alpha_i x_i + a + b) \in B \cap \ker T \subseteq B \cap T^{-1}(C)$. So we can write $z = \sum_{i \in I} \alpha_i x_i + a + (b + k) \in X + [A \cap T^{-1}(D) + T^{-1}(C) \cap B]$. 

    \item Let $Y = T(X)$. Then because $X \subseteq B$, we have $Y \subseteq T(B)$ and because $X \subseteq T^{-1}(D)$, we have $Y \subseteq D$. 

    First we show that $Y$ and $T(A) \cap D + T(B) \cap C$ are independent. Suppose that we have $y + (d + c) = 0$ where $y \in Y$, $d \in T(A) \cap D$, and $c \in T(B) \cap C$. Then $y = T(x)$ for some $x \in X$. Furthermore $d = T(a)$ for some $a \in A$ which then must be in $T^{-1}(D)$ as well, implying that actually $a \in A \cap T^{-1}(D)$, and $c = T(b)$ for some $b \in B$ which then must be in $T^{-1}(C)$ implying that $b \in B \cap T^{-1}(C)$. We also have $x + a + b \in \ker T \subseteq T^{-1}(C)$, so there exists $w \in T^{-1}(C)$ such that $x + a + b = w$. Note that $x + a + b \in B$, hence $w \in B \cap T^{-1}(C)$, therefore rearranging yields $x + a + (b - w) = 0$ where $x \in X$, $a \in A \cap T^{-1}(D)$, and $b - w \in B \cap T^{-1}(C)$. By hypothesis $X$ is independent from $A \cap T^{-1}(D) + B \cap T^{-1}(C)$, so $x = 0$ which implies $y = 0$ and therefore $d + c = 0$ as well, as desired. 

    Now we show that $T(B) \cap D = [T(A) \cap D + T(B) \cap C] + Y$. Given any $z \in T(B) \cap D$, there exists $v \in B$ with $T(v) = z \in D$ and hence $v \in B \cap T^{-1}(D)$ automatically. Since $B \cap T^{-1}(D) = [A \cap T^{-1}(D) + B \cap T^{-1}(C)] + X$ by hypothesis, we can write $v = x + a + b$ where $x \in X$, $a \in A \cap T^{-1}(D)$, and $b \in B \cap T^{-1}(C)$. Then $T(a) \in T(A) \cap D$, $T(b) \in T(B) \cap C$ and $T(x) \in Y$, and we have $z = T(v) = T(x) + T(a) + T(b)$, as desired. 

    Finally we must show that $T$ maps $X$ isomorphically onto $Y$. By definition $T(X) = Y$, so we need only show that $T|_X$ is injective. Suppose $x \in X$ is such that $x \in \ker T$. Then since $\ker T \subseteq T^{-1}(C)$ and since $X \subseteq B$, we have $x \in B \cap \ker T \subseteq B \cap T^{-1}(C)$. However $X$ is independent from $B \cap T^{-1}(C)$ by hypothesis, so $x = 0$. 
    
\end{enumerate}

\end{proof}

The following corollary follows immediately from Lemma \ref{lem: image and inverse image complements}.  

\begin{corollary}\label{cor: transport complements}
    Suppose that $V$ and $W$ are vector spaces and $A \subseteq B \subseteq V$ and $C \subseteq D \subseteq W$ are subspaces. If $T$ is a linear map $V \to W$ or $W \to V$ and $X \subseteq V$ is such that $B \cap D_T = [A \cap D_T + B \cap C_T] \oplus X$, then there exists $Y \subseteq W$ such that $T$ is an isomorphism between $X$ and $Y$ and $B_T \cap D = [A_T \cap D + B_T \cap C] \oplus Y$. 
\end{corollary}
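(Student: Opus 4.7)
The plan is to unpack the transport notation in the hypothesis and conclusion and observe that each of the two possible directions for $T$ reduces directly to one of the two parts of Lemma \ref{lem: image and inverse image complements}. Since Corollary \ref{cor: transport complements} is purely a notational reformulation, the main work is bookkeeping rather than any new argument: the whole point of the corollary is to allow subsequent transport arguments along arbitrary journeys to treat arrows pointing in either direction uniformly.

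First I would treat the case $T \colon V \to W$. Here the definition of transport gives $D_T = T^{-1}(D)$, $C_T = T^{-1}(C)$, $A_T = T(A)$, and $B_T = T(B)$, so the hypothesis becomes $B \cap T^{-1}(D) = [A \cap T^{-1}(D) + B \cap T^{-1}(C)] \oplus X$, and the conclusion to prove becomes the existence of $Y \subseteq W$ with $T$ restricting to an isomorphism $X \to Y$ and $T(B) \cap D = [T(A) \cap D + T(B) \cap C] \oplus Y$. This is verbatim the statement of part (2) of Lemma \ref{lem: image and inverse image complements}, so no further argument is needed.

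In the case $T \colon W \to V$, the transports unfold dually: $D_T = T(D)$, $C_T = T(C)$, $A_T = T^{-1}(A)$, and $B_T = T^{-1}(B)$. To match the form of Lemma \ref{lem: image and inverse image complements}(1), I would apply that part with the lemma's roles of $V$ and $W$ swapped, i.e.\ with the lemma's source space taken to be our $W$, its target space our $V$, its $A \subseteq B$ chain identified with our $C \subseteq D$, and its $C \subseteq D$ chain identified with our $A \subseteq B$. Under this relabeling the corollary's hypothesis is exactly the lemma's hypothesis, and the $Y$ (in the lemma's notation, the subspace labeled $X$) it produces sits in $W$, with $T$ restricting to an isomorphism onto our $X \subseteq V$ and with the decomposition $T^{-1}(B) \cap D = [T^{-1}(A) \cap D + T^{-1}(B) \cap C] \oplus Y$, which reassembles into $B_T \cap D = [A_T \cap D + B_T \cap C] \oplus Y$. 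The only mild obstacle is keeping the four positions where $T$ can act (image or preimage on each of the two sides) straight; once the notation is unfolded both cases are immediate from the lemma.
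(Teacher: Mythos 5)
Your proposal is correct and matches the paper's approach: the paper simply states that the corollary follows immediately from Lemma \ref{lem: image and inverse image complements}, and your unpacking of the transport notation (case $T\colon V \to W$ reducing to part (2), case $T\colon W \to V$ reducing to part (1) with the roles of the two spaces and the two chains swapped) is exactly the intended bookkeeping, carried out correctly.
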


We now show that the filtrations $L^\ell$ and $R^\ell$ behave well under transport. We write $n_i - 1$ (resp. $m_i - 1$) to mean the immediate predecessor of $n_i$ (resp. $m_i$) in $I$ when such an element exists and we define $n_{-\infty} - 1 = n_{-\infty}$. 

\begin{proposition}
\
\begin{enumerate}
    \item For $i > 0$, $L^{\ell + 1}_{m_i}$ (resp. $R^{\ell}_{m_i}$) is the transport of $L^{\ell}_{m_{i - 1}}$ (resp. $R^{\ell + 1}_{m_{i - 1}}$) along $V(a_\ell)$. 
    \item For $j < 0$, $L^{\ell + 1}_{n_j}$ (resp. $R^{\ell}_{n_j}$) is the transport of $L^{\ell + 1}_{n_{j + 1}}$ (resp. $R^{\ell}_{n_{j + 1}}$) along $V(a_\ell)$.
    \item $L^{\ell + 1}_{n_{-\infty}}$ (resp. $R^{\ell}_{n_{-\infty}}$) is the transport of $L^{\ell}_{n_{-\infty}}$ (resp. $R^{\ell + 1}_{n_{-\infty}}$) along $V(a_\ell)$. 
\end{enumerate}
\end{proposition}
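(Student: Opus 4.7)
The plan is to prove all three parts by directly unpacking the definitions of the filtration indices. The common key observation is that transport along a journey is, by construction, the iterated transport along each of its arrows, so extending a journey by appending one arrow corresponds to exactly one additional transport step. This holds uniformly regardless of the orientation of the appended arrow, since the term \emph{transport} absorbs both the image ($T(A)$) and preimage ($T^{-1}(A)$) cases into a single operation.

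For part 1, I would observe that $L^{\ell+1}_{m_i}$ is by definition the transport of $0 \in V(y_{\ell+1-i})$ along the journey $y_{\ell+1-i}, y_{\ell-i+2}, \ldots, y_\ell, y_{\ell+1}$. This journey is the sub-journey $y_{\ell+1-i}, \ldots, y_\ell$ followed by one final arrow $a_\ell$. Hence the full transport factors as the transport of $0$ along the sub-journey (which is $L^\ell_{m_{i-1}}$ by definition) followed by a transport along $V(a_\ell)$. The statement for $R^\ell_{m_i}$ is the mirror image, applying the same reasoning to the journey $y_{\ell+i}, y_{\ell+i-1}, \ldots, y_{\ell+1}, y_\ell$, whose final arrow is again $a_\ell$. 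Part 2 is proved identically, with the starting subspace $0 \in V(y_{\ell+1-i})$ replaced by the full space $V(y_{\ell+1+j})$ and the sub-journey starting at that vertex.

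For part 3, I would argue directly from the strand definition. Suppose $v \in L^{\ell+1}_{n_{-\infty}}$, witnessed by a strand $(v = v_0, v_1, v_2, \ldots)$ along the journey $y_{\ell+1}, y_\ell, y_{\ell-1}, \ldots$. The tail $(v_1, v_2, \ldots)$ is itself a strand for the shifted journey $y_\ell, y_{\ell-1}, \ldots$, so $v_1 \in L^\ell_{n_{-\infty}}$. The strand equation at the first arrow $a_\ell$ reads $V(a_\ell)(v_1) = v$ if $a_\ell$ points $y_\ell \to y_{\ell+1}$ and $V(a_\ell)(v) = v_1$ otherwise; in the first case $v \in V(a_\ell)(L^\ell_{n_{-\infty}})$, and in the second $v \in V(a_\ell)^{-1}(L^\ell_{n_{-\infty}})$, both of which are by definition the transport of $L^\ell_{n_{-\infty}}$ along $V(a_\ell)$ to $V(y_{\ell+1})$. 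The converse direction is immediate by prepending $v$ to a strand witnessing $v_1$, and the case of $R^\ell_{n_{-\infty}}$ is entirely symmetric.

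The main obstacle is not mathematical depth but careful index-bookkeeping across the many subcases: parts 1, 2, and 3 each bundle two symmetric claims (for $L$ and $R$), and within each the orientation of $a_\ell$ interacts differently with the image-versus-preimage dichotomy of transport. The unified transport formalism keeps all these cases on equal footing, and the only genuine content beyond bookkeeping is the observation in part 3 that the tail of a strand along an infinite journey is itself a strand along the shifted journey.
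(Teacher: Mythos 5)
Your proposal is correct and takes essentially the same route as the paper: parts (1) and (2) via the fact that transport along a concatenated journey is the iterated transport (so appending the single arrow $a_\ell$ adds exactly one transport step), and part (3) via the strand argument, passing to the tail of a strand for one containment and prepending a vector for the other. Note you also implicitly read part (2) with the intended superscripts (transporting $L^{\ell}_{n_{j+1}}$ and $R^{\ell+1}_{n_{j+1}}$ along $V(a_\ell)$), which is the consistent form of the statement.
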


\begin{proof}
    For (1) and (2) the claim follows from the trivial fact that the transport along a path of the transport along another path is the transport along the concatenation of the paths. We now show (3). In this case, if $w$ is in the transport of $L^\ell_{n_{-\infty}}$ along $V(a_\ell)$ then if $a_\ell$ points from $y_\ell$ to $y_{\ell + 1}$ then there exists $v_0 \in L^\ell_{n_{-\infty}}$ such that $V(a_\ell)(v_0) = w$. On the other hand if $a_\ell$ points from $y_{\ell + 1}$ to $y_\ell$ then $V(a_\ell)(w) = v_0' \in V(a_\ell)$. By definition of $L^{\ell}_{n_{-\infty}}$, in either case there exists strands $(v_0, v_1, \ldots)$ and $(v_0', v_1', \ldots)$ of the journey $y_\ell, y_{\ell - 1}, \ldots$, and then either $(w , v_0, v_1, \ldots)$ or $(w , v_0', v_1', \ldots)$ are strands of the journey $y_{\ell + 1}, y_\ell, y_{\ell - 1}, \ldots$, so $w \in L^{\ell + 1}_{n_{-\infty}}$. Conversely if $w \in L^{\ell + 1}_{n_{-\infty}}$ then there exists a strand $(w = v_0 , v_1, \ldots)$ of the journey $y_{\ell + 1}, y_\ell, y_{\ell - 1}, \ldots$ which implies that $(v_1, v_2, \ldots)$ is a strand of the journey $y_\ell, y_{\ell - 1}, \ldots$, so $v_1 \in L^{\ell}_{n_{-\infty}}$ and $w$ is in the transport of $L^{\ell}_{n_{-\infty}}$ along $V(a_\ell)$. 
\end{proof}

Now suppose that $C^0$ is any almost gradation of $L^0 \cap R^0$. Precisely this means that for each $p , q \in I$, $C^0_{p , q}$ is a direct sum complement of $[L^0 \cap R^0]_{< (p , q)} = [L^0_p \cap R^0_{< q} + L^0_{<p} \cap R^0_q]$ (see Proposition \ref{prop: almost gradations are independent}) inside of $L^0_p \cap R^0_q$. Suppose for $\ell \geq 0$ that we have an almost gradation $C^\ell$ of $L^\ell \cap R^\ell$. There are six cases to consider. We use the convention that $(-\infty) \pm 1 = -\infty$.

\begin{enumerate}
    \item If $p = m_i$ and $q = m_j$ for some $2 \leq i < \infty$ and $0 \leq j < \infty$, then $L^{\ell + 1}_{m_{i - 1}} \subseteq L^{\ell + 1}_{m_i}$ are the transports of $L^{\ell}_{m_{i - 2}}  \subseteq L^{\ell}_{m_{i - 1}}$ from $V(y_{\ell})$ to $V(y_{\ell + 1})$ and $R^{\ell}_{m_{j}} \subseteq R^{\ell}_{m_{j + 1}}$ are the transports of $R^{\ell + 1}_{m_{j - 1}} \subseteq R^{\ell + 1}_{m_{j}}$ from $V(y_{\ell + 1})$ to $V(y_{\ell})$. Since we have that 
    \begin{equation*}
        L^{\ell}_{m_{i - 1}} \cap R^{\ell}_{m_{j + 1}} = [L^{\ell}_{m_{i - 2}} \cap R^{\ell}_{m_{j + 1}} + L^{\ell}_{m_{i - 1}} \cap R^{\ell}_{m_{j}}] \oplus C^{\ell}_{m_{i - 1} , m_{j + 1}}
    \end{equation*}
    by hypothesis, it follows from Corollary \ref{cor: transport complements} that there exists $C^{\ell + 1}_{m_i , m_j} \subseteq V(y_{\ell + 1})$ such that  
    \begin{equation*}
        L^{\ell + 1}_{m_i} \cap R^{\ell + 1}_{m_j} = [L^{\ell + 1}_{m_{i - 1}} \cap R^{\ell + 1}_{m_{j}} + L^{\ell + 1}_{m_{i}} \cap R^{\ell + 1}_{m_{j - 1}}] \oplus C^{\ell + 1}_{m_i , m_j}
    \end{equation*}
    and $V(a_\ell)$ is an isomorphism between $C^\ell_{m_{i - 1} , m_{j + 1}}$ and $C^{\ell + 1}_{m_i , m_j}$. 
    
    \item If $p = n_i$ and $q = m_j$ for $-\infty \leq i \leq -1$ and $0 \leq j < \infty$, then a similar argument to that given in (1) shows there exists $C^{\ell + 1}_{n_i , m_j}\subseteq V(y_{\ell + 1})$ which is a complement of $[L^{\ell + 1} \cap R^{\ell + 1}]_{< (p , q)}$ inside of $[L^{\ell + 1} \cap R^{\ell + 1}]_{(p , q)}$ such that $V(a_\ell)$ is an isomorphism between $C^\ell_{n_{i + 1} , m_{j + 1}}$ and $C^{\ell + 1}_{n_i , m_j}$.  
    
    \item If $p = m_i$ and $q = n_j$ for $2 \leq i < \infty$ and $-\infty \leq j \leq 0$, again there exists $C^{\ell + 1}_{m_i , n_j}\subseteq V(y_{\ell + 1})$ which is a complement of $[L^{\ell + 1} \cap R^{\ell + 1}]_{< (p , q)}$ inside of $[L^{\ell + 1} \cap R^{\ell + 1}]_{(p , q)}$ such that $V(a_\ell)$ is an isomorphism between $C^\ell_{m_{i - 1} , n_{j - 1}}$ and $C^{\ell + 1}_{m_i , n_j}$.
    
    \item If $p = n_i$ and $q = n_j$ for $-\infty \leq i \leq -1$ and $-\infty \leq j \leq 0$, again there exists $C^{\ell + 1}_{n_i , n_j}\subseteq V(y_{\ell + 1})$ which is a complement of $[L^{\ell + 1} \cap R^{\ell + 1}]_{< (p , q)}$ inside of $[L^{\ell + 1} \cap R^{\ell + 1}]_{(p , q)}$ such that $V(a_\ell)$ is an isomorphism between $C^\ell_{n_{i + 1} , n_{j - 1}}$ and $C^{\ell + 1}_{n_i , n_j}$.

    \item If $p = m_1$ or $p = n_0$ we define $C^{\ell + 1}_{p , q}$ to be any direct sum complement of $[L^{\ell + 1} \cap R^{\ell + 1}]_{< (p , q)}$ inside of $[L^{\ell + 1} \cap R^{\ell + 1}]_{(p , q)}$.

    \item If $p = m_0$ then we are forced to let $C^\ell_{p , q} = 0$.

\end{enumerate}

We use a similar method to define $C^\ell_{p , q}$ for $\ell < 0$. 
Therefore we have obtained almost gradations $C^{\ell}_{p , q}$ for all $p , q \in I$ and all $\ell \geq 0$ which are compatible for varying $\ell$ in the sense that we can use them to define the following subrepresentations of $V$. For any $s , t \in \mathbb{Z}$ such that $s \leq t$, define four subrepresentations, $D^{m , m}_{s , t}$, $D^{n , m}_{s , t}$, $D^{m , n}_{s , t}$, and $D^{n , n}_{s , t}$, as follows. 

\begin{align*}
    D^{m , m}_{s , t}(y_\ell) &= \left\{ \begin{matrix} C^{\ell}_{m_{1 + \ell - s} , m_{1 + t - \ell}} & \text{ if }   & s \leq \ell \leq t \\ 0 & \text{ if } & \ell < s \text{ or } \ell > t \end{matrix} \right. 
\\ D^{m , n}_{s , t}(y_\ell) &= \left\{ \begin{matrix} C^{\ell}_{m_{1 + \ell - s} , n_{\ell - t}} & \text{ if }   & s \leq \ell \leq t \\ 0 & \text{ if } & \ell < s \text{ or } \ell > t \end{matrix} \right. 
\\ D^{n , m}_{s , t}(y_\ell) &= \left\{ \begin{matrix}  C^{\ell}_{n_{s - \ell} , m_{1 + t - \ell}} & \text{ if }   & s \leq \ell \leq t \\ 0 & \text{ if } & \ell < s \text{ or } \ell > t \end{matrix} \right. 
\\ D^{n , n}_{s , t}(y_\ell) &= \left\{ \begin{matrix} C^{\ell}_{n_{s - \ell} , n_{\ell - t}} & \text{ if }   & s \leq \ell \leq t \\ 0 & \text{ if } & \ell < s \text{ or } \ell > t \end{matrix} \right. 
\end{align*}

Furthermore define five more subrepresentations $D^{m , n}_{s , \infty}$, $D^{n , m}_{-\infty , t}$, and $D^{n , n}_{-\infty , \infty}$ as follows. 
\begin{align*}
D^{m , n}_{s , \infty}(y_\ell) &= \left\{ \begin{matrix} C^{\ell}_{m_{1 + \ell - s} , n_{-\infty}} & \text{ if }   & \ell \geq s \\ 0 & \text{ if } & \ell < s \end{matrix} \right.
\\ D^{n , n}_{s , \infty}(y_\ell) &= \left\{ \begin{matrix} C^{\ell}_{n_{s - \ell} , n_{-\infty}} & \text{ if }   & \ell \geq s \\ 0 & \text{ if } & \ell < s \end{matrix} \right.
    \\ D^{n , m}_{-\infty , t}(y_\ell) &= \left\{ \begin{matrix}  C^{\ell}_{n_{-\infty} , m_{1 + t - \ell}} & \text{ if }   & \ell \leq t \\ 0 & \text{ if } & \ell > t \end{matrix} \right. 
\\ D^{n , n}_{-\infty , t}(y_\ell) &= \left\{ \begin{matrix}  C^{\ell}_{n_{-\infty} , n_{\ell - t}} & \text{ if }   & \ell \leq t \\ 0 & \text{ if } & \ell > t \end{matrix} \right. 
\\ D^{n , n}_{-\infty , \infty}(y_\ell) &= C^{\ell}_{n_{-\infty} , n_{-\infty}}
\end{align*}

\begin{proposition}\label{prop: Ds are subreps}
For $b , c \in \{ m , n \}$ and $s , t \in \zz$, $W$ is a subrepresentation of $V$ for $W$ equal to one of $D_{s , t}^{b , c}$, $D^{m , n}_{s , \infty}$, $D^{n , m}_{-\infty , t}$, or $D^{n , n}_{-\infty , \infty}$. Furthermore if $s \leq \ell < t$ then $W(a_\ell)$ is an isomorphism and otherwise $W(a_\ell) = 0$.
\end{proposition}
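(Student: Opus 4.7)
The claim splits into two assertions for each of the nine families: that $W$ is a subrepresentation of $V$, and that the restricted map $W(a_\ell)$ is an isomorphism for $s\le\ell<t$ and zero otherwise (with the natural interpretation at $\pm\infty$). The argument separates interior arrows, where both $W(y_\ell)$ and $W(y_{\ell+1})$ lie in the support of $W$, from boundary arrows, where at most one endpoint does.

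For interior arrows the statement is an immediate consequence of the construction of the $C^\ell$. Given $W=D^{b,c}_{s,t}$ and $s\le\ell<t$, exactly one of cases (1)--(4) directly produces an isomorphism $V(a_\ell)\maps C^\ell_{p,q}\to C^{\ell+1}_{p',q'}$ with $C^\ell_{p,q}=W(y_\ell)$ and $C^{\ell+1}_{p',q'}=W(y_{\ell+1})$, once one reads off the indices. For example, $W=D^{m,m}_{s,t}$ invokes case (1) with $i=2+\ell-s\geq 2$ and $j=t-\ell\geq 0$; $W=D^{n,m}_{s,t}$ invokes case (2) with $i=s-\ell-1\leq-1$ and $j=t-\ell\geq 0$; and the semi-infinite families $D^{m,n}_{s,\infty}$, $D^{n,m}_{-\infty,t}$, $D^{n,n}_{-\infty,\infty}$ (and the unlisted $D^{n,n}_{s,\infty}$, $D^{n,n}_{-\infty,t}$) use the same formulas with the convention $\pm\infty\pm 1=\pm\infty$ to cover the $n_{-\infty}$ slot. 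In every case this isomorphism is simultaneously the subrepresentation condition at $a_\ell$ and the $W(a_\ell)$ asserted to be an isomorphism.

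For boundary arrows I would focus on $\ell=s-1$ (the right-boundary case $\ell=t$ being symmetric via the $R$ filtration). Here $W(y_{s-1})=0$ and $W(y_s)=C^s_{p,q}$, where $p=m_1$ if $b=m$ and $p=n_0$ if $b=n$. If $a_{s-1}$ points $y_{s-1}\to y_s$, then $V(a_{s-1})(W(y_{s-1}))=V(a_{s-1})(0)=0\subseteq W(y_s)$ is immediate, and $W(a_{s-1})=0$ because its domain is zero. If instead $a_{s-1}$ points $y_s\to y_{s-1}$, split on $b$. When $b=m$, the definition of transport gives $L^s_{m_1}=V(a_{s-1})^{-1}(0)=\Ker V(a_{s-1})$, and since $W(y_s)\subseteq L^s_{m_1}$, the restriction of $V(a_{s-1})$ to $W(y_s)$ is zero. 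When $b=n$, the same definition gives $L^s_{n_{-1}}=V(a_{s-1})^{-1}(V(y_{s-1}))=V(y_s)$, which forces $L^s_{<n_0}=L^s_{n_0}$; then $[L^s\cap R^s]_{<(n_0,q)}$ already exhausts $L^s_{n_0}\cap R^s_q$, so the complement $C^s_{n_0,q}=0$, hence $W(y_s)=0$, and by the interior isomorphism cascade $W\equiv 0$ everywhere. In all situations the subrepresentation condition holds and $W(a_{s-1})=0$. Arrows with neither endpoint in the support are trivially fine.

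The main obstacle is purely bookkeeping: four choices of $(b,c)\in\{m,n\}^2$, two boundary positions $\ell=s-1$ and $\ell=t$, and two orientations per boundary arrow, multiplied across the nine subrepresentation families. Conceptually, however, every situation reduces to one of two mechanisms at a boundary, a kernel identification of the form $L^s_{m_1}=\Ker V(a_{s-1})$ (or its right analogue $R^t_{m_1}=\Ker V(a_t)$) in the $m$-type case, and a filtration-saturation identification $L^s_{<n_0}=L^s_{n_0}$ (or its right analogue) forcing the complement to vanish in the $n$-type case, combined with the isomorphism built into cases (1)--(4) at interior arrows.
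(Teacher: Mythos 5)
Your proposal is correct and follows essentially the same route as the paper: interior arrows are handled by the isomorphisms built into cases (1)--(4) of the construction of $C^{\ell+1}$ from $C^\ell$, and boundary arrows split on orientation, using $L^s_{m_1}=\Ker V(a_{s-1})$ in the $m$-type case and the collapse $L^s_{n_{-1}}=L^s_{n_0}$ forcing $C^s_{n_0,q}=0$ in the $n$-type case. Your index bookkeeping at interior arrows and your writing of $L^s_{n_{-1}}$ (where the paper has a typo reading $L^s_{n_1}$) are both accurate, so nothing further is needed.
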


\begin{proof}
    Let $\ell \in \zz$. We shall show that $V(a_\ell)$ is a map between $W(y_\ell)$ and $W(y_{\ell + 1})$. There are several cases to consider. 
    
    (\emph{Case 1}: $\ell + 1 < s$ or $\ell > t$ and any $W$). In this case $W(y_\ell) = W(y_{\ell + 1}) = 0$, hence $V(a_\ell)$ is a map between $V(a_\ell)$ is a map between $W(y_\ell)$ and $W(y_{\ell + 1})$ and $W(a_\ell) = 0$.

    (\emph{Case 2}: $\ell + 1 = s$ and $W = D_{s , t}^{b , c}$ or $W = D^{b , n}_{s , \infty}$). First of all, $W(y_\ell) = 0$. If $a_\ell : y_\ell \to y_{\ell + 1}$ then automatically $V(a_\ell)$ is a map between $W(y_\ell)$ and $W(y_{\ell + 1})$ and $W(a_\ell) = 0$. Now suppose $a_\ell : y_{\ell + 1} \to y_{\ell}$. We consider two subcases. If $W = D_{s , t}^{m , c}$ or $W = D_{s , \infty}^{m , n}$, then we have $W(y_{\ell + 1}) = C^s_{m_1, q} \subseteq L^s_{m_1} \cap R^s_{q}$ for some $q \in I$. Recall that $L^s_{m_1}$ is the transport of $0 \subseteq V(y_\ell)$ to $V(y_s)$, so $V(a_\ell)(L^s_{m_1}) = 0$, hence $V(a_\ell)(W(y_{\ell + 1})) \subseteq W(y_\ell)$ and $W(a_\ell) = 0$. On the other hand, if $W = D_{s , t}^{n , c}$ or $W = D_{s , \infty}^{n , n}$  we have $W(y_{\ell + 1}) = C^s_{n_0, q}$  for some $q \in I$. But by definition, $L^s_{n_1} = V(a_\ell)^{-1}(V(y_{\ell})) = V(y_{\ell + 1}) = L^s_{n_0}$. Thus $L^s_{n_1} \cap R^s_{q} = L^s_{n_0} \cap R^s_{q}$, and so since $L^s_{n_0} \cap R^s_{q} = [L^s_{n_1} \cap R^s_{q} + L^s_{n_0} \cap R^s_{<q}] \oplus C^s_{n_0, q}$, we have that $C^s_{n_0, q} = 0$, so indeed $V(a_\ell)(W(y_{\ell + 1})) = V(a_\ell)(0) = 0 = W(y_\ell)$, and $W(a_\ell) = 0$. 
    
    (\emph{Case 3}: $\ell = t$ and $W = D_{s , t}^{b , c}$ or $W = D^{n , c}_{-\infty , t}$). After replacing $L$ by $R$, a similar argument to that in Case 3 gives the desired results.
    
    (\emph{Case 4}: $s \leq \ell < t$ and $W = D_{s , t}^{b , c}$, $W = D_{s , \infty}^{m , n}$, $W = D_{-\infty , t}^{m , n}$, or $W = D_{-\infty , \infty}^{n , n}$). By construction $V(a_\ell)$ is an isomorphism between the following pairs of subspaces, which simultaneously shows that $V(a_\ell)$ is a map between $W(y_\ell)$ and $W(y_{\ell + 1})$ and that $W(a_\ell)$ is an isomorphism.  
    \begin{align*}
        C^\ell_{m_{1 + \ell - s}, m_{1 + t - \ell}} &\cong C^{\ell + 1}_{m_{1 + (\ell + 1) - s}, m_{1 + t - (\ell + 1)}}
        \\ C^\ell_{m_{1 + \ell - s}, n_{\ell - t}} &\cong C^{\ell + 1}_{m_{1 + (\ell + 1) - s}, n_{(\ell + 1) - t}}
        \\ C^\ell_{n_{s - \ell}, m_{1 + t - \ell}} &\cong C^{\ell + 1}_{n_{s - (\ell + 1)}, m_{1 + t - (\ell + 1)}}
        \\ C^\ell_{n_{s - \ell}, n_{\ell - t}} &\cong C^{\ell + 1}_{n_{s - (\ell + 1)}, n_{(\ell + 1) - t}}
        \\ C^\ell_{m_{1 + \ell - s}, n_{-\infty}} &\cong C^{\ell + 1}_{m_{1 + (\ell + 1) - s}, n_{-\infty}}
        \\ C^\ell_{n_{s - \ell}, n_{-\infty}} &\cong C^{\ell + 1}_{n_{s - (\ell + 1)}, n_{-\infty}}
        \\ C^\ell_{n_{-\infty}, m_{1 + t - \ell}} &\cong C^{\ell + 1}_{n_{-\infty}, m_{1 + t - (\ell + 1)}}
        \\ C^\ell_{n_{-\infty}, n_{\ell - t}} &\cong C^{\ell + 1}_{n_{-\infty}, n_{(\ell + 1) - t}}
        \\ C^\ell_{n_{-\infty}, n_{-\infty}} &\cong C^{\ell + 1}_{n_{-\infty}, n_{-\infty}}
    \end{align*}
\end{proof}

We now prove a few properties of the subrepresentations $D^{b , c}_{s , t}$ that will be needed shortly. 

\begin{lemma}\label{lem: C ell is one of the Ds}
For any $p , q \in I$, $C^\ell_{p , q}$ is equal to $W(y_\ell)$ for $W$ one of the subrepresentations $D^{b , c}_{s , t}$, $D^{b , n}_{s , \infty}$, $D^{n , c}_{-\infty , t}$, or $D^{n , n}_{-\infty , \infty}$ for $b , c \in \{ m, n \}$. 
\end{lemma}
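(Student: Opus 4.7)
The plan is to proceed by explicit case analysis on the pair $(p,q) \in I \times I$. The key observation is that the parameters $(s,t)$ of the subrepresentations $D^{b,c}$ are set up so that, evaluated at a fixed vertex $y_\ell$, they sweep out all possible complements $C^\ell_{p,q}$. Concretely, I would partition $I$ into three types: the minimum element $m_0$, the elements $m_i$ with $i \geq 1$, the finite elements $n_j$ with $-1 \leq j \leq 0$, and the limit element $n_{-\infty}$. The superscript $b$ (resp.\ $c$) of $D^{b,c}$ should be $m$ when $p$ (resp.\ $q$) is in the $m_i$ range and $n$ when it is in the $n_j$ range (with the infinite variants reserved for $n_{-\infty}$).

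For the main (non-boundary) cases, the correspondence I would use is as follows. If $p = m_i$ with $i \geq 1$, take $s = \ell - i + 1$; if $p = n_i$ with $i \leq 0$, take $s = \ell + i$; if $p = n_{-\infty}$, take $s = -\infty$. Symmetrically for $q$: if $q = m_j$ with $j \geq 1$, take $t = \ell + j - 1$; if $q = n_j$ with $j \leq 0$, take $t = \ell - j$; if $q = n_{-\infty}$, take $t = \infty$. Unpacking the definition of $W(y_\ell)$ for $W = D^{b,c}_{s,t}$ (or the appropriate infinite variant) immediately yields $W(y_\ell) = C^\ell_{p,q}$. I would then check that $s \leq \ell \leq t$ and $s \leq t$ hold in each case, which in every instance reduces to an elementary inequality on $i$ and $j$ (e.g., in the $D^{n,n}_{s,t}$ case, $s \leq t$ becomes $i + j \leq 0$, which is automatic when $i, j \leq 0$).

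The boundary cases are $p = m_0$ or $q = m_0$. Since $L^\ell_{m_0} = 0$ (and similarly $R^\ell_{m_0} = 0$), we have $C^\ell_{p,q} \subseteq L^\ell_p \cap R^\ell_q = 0$, so $C^\ell_{p,q} = 0$. To exhibit this as $W(y_\ell)$ for a listed $W$, I would simply choose any $D^{m,m}_{s,t}$ whose range misses $\ell$, for instance $D^{m,m}_{\ell+1,\ell+1}$, which by definition evaluates to $0$ at $y_\ell$.

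The main obstacle is purely bookkeeping: one must enumerate roughly a dozen sub-cases (the nine combinations of types for $p$ and $q$, plus the $m_0$-boundary cases) and verify the range conditions $s \leq \ell \leq t$ and $s \leq t$ for each. There is no conceptual difficulty, since the definitions of the $D^{b,c}$ were engineered precisely to encode this correspondence; the work is in writing down the case analysis cleanly and confirming that every $C^\ell_{p,q}$ lands in the intended slot of exactly one $D$.
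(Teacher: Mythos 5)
Your proposal is correct and takes essentially the same approach as the paper, which simply inverts the defining formulas of $D^{b,c}_{s,t}(y_\ell)$ to solve for $s$ and $t$ (e.g.\ $s = 1 + \ell - i$, $t = \ell - 1 + j$ for $C^\ell_{m_i, m_j}$) and declares the remaining cases similar; your explicit handling of the $m_0$ boundary, where $C^\ell_{p,q} = 0$ because $L^\ell_{m_0} = R^\ell_{m_0} = 0$, is a detail the paper leaves implicit. The only slip is in your initial description of the partition of $I$: the finite elements $n_j$ range over all $j \leq 0$, not just $-1 \leq j \leq 0$, but your subsequent formulas treat the full range correctly, so nothing is affected.
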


\begin{proof}
Given $\ell \in \zz$ and $i , j \geq 0$, if we let $s = 1 + \ell - i$ and $t = \ell - 1 + j$ we obtain that $C^\ell_{m_i , m_j} = D^{m , m}_{s , t}(y_\ell)$. A similar method shows the desired result for all $p , q \in I$ and $\ell \in \zz$. 
\end{proof}

\begin{lemma}\label{lem: at most one nonzero D}
    Fix $s , t \in \zz$ with $s \leq t$. 
    
    \begin{enumerate}
        \item At most one of $D_{s , t}^{m , m}$, $D_{s , t}^{m , n}$, $D_{s , t}^{n , m}$, and $D_{s , t}^{n , n}$ is nonzero.
        \item At most one of $D^{m , n}_{s , \infty}$, $D^{n , n}_{s , \infty}$ is nonzero. 
        \item At most one of $D^{n , m}_{-\infty , t}$ and $D^{n , n}_{-\infty , t}$ is nonzero. 
    \end{enumerate} 
\end{lemma}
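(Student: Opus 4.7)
The plan is to reduce each part to a vanishing check at a single endpoint vertex, and then to observe that the direction of a single arrow simultaneously kills two of the four candidate $D$'s at that endpoint.

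By Proposition~\ref{prop: Ds are subreps}, for any $W$ on the list $W(a_\ell)$ is an isomorphism throughout the support of $W$, so the vector spaces $W(y_\ell)$ are all mutually isomorphic as $\ell$ varies in the support. Hence $W$ is the zero subrepresentation if and only if $W$ vanishes at any single vertex of its support; in particular $D^{b,c}_{s,t}$ is zero iff $D^{b,c}_{s,t}(y_s) = 0$, $D^{b,n}_{s,\infty}$ is zero iff $D^{b,n}_{s,\infty}(y_s) = 0$, and $D^{n,c}_{-\infty,t}$ is zero iff $D^{n,c}_{-\infty,t}(y_t) = 0$. Each part thus reduces to deciding when certain $C^s_{p,q}$ with $p \in \{m_1, n_0\}$ or $C^t_{p,q}$ with $q \in \{m_1, n_0\}$ can be nonzero.

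The heart of the argument is an orientation computation at $y_s$ for the arrow $a_{s-1}$ between $y_{s-1}$ and $y_s$. If $a_{s-1}$ points $y_{s-1} \to y_s$, then $L^s_{m_1}$, being the transport of $0 \subseteq V(y_{s-1})$ along $V(a_{s-1})$, equals $V(a_{s-1})(0) = 0$, forcing $C^s_{m_1, q} = 0$ for every $q$. If instead $a_{s-1}$ points $y_s \to y_{s-1}$, then $L^s_{n_{-1}}$, being the transport of $V(y_{s-1})$ along $V(a_{s-1}) \maps V(y_s) \to V(y_{s-1})$, equals $V(a_{s-1})^{-1}(V(y_{s-1})) = V(y_s) = L^s_{n_0}$; applying Proposition~\ref{prop: almost gradations are independent} then gives $[L^s \cap R^s]_{<(n_0, q)} \supseteq L^s_{n_{-1}} \cap R^s_q = L^s_{n_0} \cap R^s_q$, so the complement $C^s_{n_0, q}$ must be zero for every $q$. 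A symmetric computation with $R^t$ at $y_t$ shows that if $a_t$ points $y_{t+1} \to y_t$ then $C^t_{p, m_1} = 0$ for every $p$, while if $a_t$ points $y_t \to y_{t+1}$ then $C^t_{p, n_0} = 0$ for every $p$.

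To finish, note that at $y_s$ the subrepresentations $D^{m,\cdot}_{s,t}$ use $C^s_{m_1, \cdot}$ and $D^{n,\cdot}_{s,t}$ use $C^s_{n_0, \cdot}$, while at $y_t$ the subrepresentations $D^{\cdot,m}_{s,t}$ use $C^t_{\cdot, m_1}$ and $D^{\cdot,n}_{s,t}$ use $C^t_{\cdot, n_0}$. Each of the four orientation configurations of $(a_{s-1}, a_t)$ therefore kills two of the four $D$'s at $y_s$ and two of the four at $y_t$, and a direct check in each configuration shows that exactly one candidate survives both conditions; this proves (1). Parts (2) and (3) follow from the same orientation argument applied at the unique finite endpoint, where the direction of a single arrow rules out exactly one of the two candidates. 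The main obstacle is bookkeeping: the $L$-based and $R$-based computations are symmetric, but the transport convention switches between image and preimage depending on arrow direction, and one must consistently track which $C^\ell_{p,q}$ each $D$ uses at each endpoint.
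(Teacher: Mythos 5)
Your proposal is correct and follows essentially the same route as the paper: an orientation check on the single arrow $a_{s-1}$ (resp.\ $a_t$) showing that either $L^s_{m_1}=L^s_{m_0}$ or $L^s_{n_{-1}}=L^s_{n_0}$, which forces $C^s_{m_1,q}=0$ or $C^s_{n_0,q}=0$ for all $q$, combined with the fact from Proposition \ref{prop: Ds are subreps} that each $W(a_\ell)$ is an isomorphism on the support, so vanishing at one endpoint kills the whole subrepresentation. Your write-up is in fact slightly cleaner on the indices (the paper's $L^s_{n_1}$ is a typo for $L^s_{n_{-1}}$), and the only cosmetic caveat is that ``exactly one candidate survives'' should be read as ``at most one candidate is not ruled out,'' which is all the lemma claims.
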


\begin{proof}
Suppose $a_{s - 1}$ points from $y_s$ to $y_{s - 1}$. Then $L^s_{n_1} = V(a_{s - 1})^{-1}(V(y_{s - 1})) = L^s_{n_0}$. Hence if $W$ equals $D_{s , t}^{n , c}, D_{s , \infty}^{n , n}$, we have $W(y_s) = C^s_{n_0 , q} = 0$ for all $q \in I$. Since $V(a_\ell)$ is an isomorphism between $W(y_\ell)$ and $W(y_{\ell + 1})$ for $s \leq \ell < t$, it follows that $W = 0$. On the other hand if $a_{s - 1}$ points from $y_{s - 1}$ to $y_{s}$, then $L^s_{m_1} = V(a_{s - 1})(0) = L^s_{m_0}$. Hence if $W$ equals $D_{s , t}^{m , c}, D_{s , \infty}^{m , n}$, we have $W(y_s) = C^s_{n_0 , q} = 0$ for all $q \in I$ and as above we obtain $W = 0$.

Similar arguments show that if $a_t$ points from $y_t$ to $y_{t + 1}$, then $D_{s , t}^{b , n}, D_{-\infty, t}^{n , n} = 0$ and if $a_t$ points from $y_{t + 1}$ to $y_t$ then $D_{s , t}^{b , m}, D_{-\infty, t}^{n , m} = 0$. The desired result follows. 
\end{proof}

For any $s , t \in \zz$, we define 
\begin{align*}
    E_{s , t} &= D_{s , t}^{m , m} + D_{s , t}^{m , n} + D_{s , t}^{n , m} + D_{s , t}^{n , n}, 
    \\ E_{s , \infty} &= D_{s , \infty}^{m , n} + D_{s , \infty}^{n , n},
    \\ E_{-\infty , t} &= D_{-\infty , t}^{n , m} + D_{-\infty , t}^{n , n},
    \\ E_{-\infty , \infty} &= D^{n , n}_{-\infty , \infty}
\end{align*}

\begin{proposition}
    For any $s , t \in \zz \cup \{ \pm \infty \}$, $E_{s , t}$ is an isotypic subrepresentation of $V$. It is a direct sum of a thin indecomposable representation of $\Omega$ supported on the full subquiver with vertices $\{ y_\ell \mid \ell \in [s , t] \cap \zz \}$. 
\end{proposition}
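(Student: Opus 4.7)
The plan is to first reduce $E_{s,t}$ to a single $D$-summand. In each of the four regimes ($s,t$ both finite; $s$ finite and $t=+\infty$; $s=-\infty$ and $t$ finite; $s=-\infty$ and $t=+\infty$), Lemma \ref{lem: at most one nonzero D} (or the definition of $E_{-\infty,\infty}$ in the last case) says that at most one of the $D^{b,c}$'s in the sum defining $E_{s,t}$ is nonzero. So there is a unique such $D$ with $E_{s,t}=D$, and we may assume $D\neq 0$ (otherwise the claim is trivial). By Proposition \ref{prop: Ds are subreps}, $D(a_\ell)$ is an isomorphism for $s \leq \ell < t$, vanishes otherwise, and $D(y_\ell)=0$ when $\ell\notin[s,t]\cap\zz$. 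Hence $D$ is supported on the full subquiver $\Sigma$ of $\Omega$ with vertex set $\{y_\ell \mid \ell \in [s,t] \cap \zz\}$, and every arrow map of $D$ restricted to $\Sigma$ is an isomorphism.

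Next I would fix a reference vertex $y_{\ell_0} \in \Sigma$: take $\ell_0 = s$ when $s \in \zz$; take $\ell_0 = t$ when $s=-\infty$ but $t \in \zz$; and take any integer $\ell_0$ when $s = -\infty$ and $t = +\infty$. Choose a basis $\{v_i\}_{i \in I}$ of $D(y_{\ell_0})$. Because every $D(a_k)$ with $k \in [s,t) \cap \zz$ is an isomorphism, for each $i \in I$ and each $\ell \in [s,t] \cap \zz$ there is a unique vector $v_i^\ell \in D(y_\ell)$ such that $(v_i^\ell)_\ell$ is a strand of the underlying journey of $\Sigma$ and $v_i^{\ell_0} = v_i$. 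Define a subrepresentation $W_i$ of $D$ by $W_i(y_\ell) = \ff v_i^\ell$ for $\ell \in [s,t] \cap \zz$ and $W_i(y_\ell) = 0$ otherwise; the strand condition precisely ensures that $W_i$ is closed under the maps $V(a_k)$, so $W_i$ is a thin subrepresentation of $V$ supported on $\Sigma$.

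Finally, each $W_i$, viewed as a representation of $\Sigma$, has all vertex spaces one-dimensional and all arrow maps isomorphisms, hence is the unique thin indecomposable $U_{s,t}$ on $\Sigma$; in particular the $W_i$ are pairwise isomorphic. Since $D(a_k)$ is an isomorphism for every $k \in [s,t) \cap \zz$, the image of a basis is a basis, so $\{v_i^\ell\}_{i \in I}$ is a basis of $D(y_\ell)$ at every $\ell$, which gives $D = \bigoplus_{i \in I} W_i$ vertexwise, hence as a subrepresentation. This shows $E_{s,t}$ is isotypic of type $U_{s,t}$, as required. The only real delicacy is in the unbounded cases, where the basis has to be propagated in an infinite direction; this is legitimate because the compatibility statements in Cases (1)--(4) of the construction of the $C^\ell$'s, together with Case 4 of the proof of Proposition \ref{prop: Ds are subreps}, guarantee that the transported vectors $v_i^\ell$ always land in the appropriate $C^\ell_{p,q}$, so no further verification is needed.
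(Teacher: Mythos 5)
Your proposal is correct and follows essentially the same route as the paper: reduce $E_{s,t}$ to the single nonzero $D$-summand via Lemma \ref{lem: at most one nonzero D}, invoke Proposition \ref{prop: Ds are subreps} to see all arrow maps on $[s,t]$ are isomorphisms, and then split off thin summands by choosing compatible bases. You simply spell out the paper's phrase ``taking compatible bases'' by propagating a basis of $D(y_{\ell_0})$ along the isomorphisms into strands $W_i$; this is a legitimate elaboration (and the ``delicacy'' you flag in the unbounded cases is not really one, since the propagation is step-by-step and each $v_i^\ell$ lands in $D(y_\ell)$ by construction).
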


\begin{proof}
For $s , t \in \zz$, by Lemma \ref{lem: at most one nonzero D}, $E_{s , t}$ is equal to either $D_{s , t}^{m , m}$, $D_{s , t}^{m , n}$, $D_{s , t}^{n , m}$, or $D_{s , t}^{n , n}$. In any of these cases, by Proposition \ref{prop: Ds are subreps} $W(a_\ell)$ is an isomorphism if $s \leq \ell < t$ and is zero otherwise. Taking compatible bases for $E_{s , t}(y_\ell)$ for $s \leq \ell \leq t$, $E_{s , t}$ is then a direct sum of the desired thin subrepresentations, which are all isomorphic to each other. The cases with $s = -\infty$ or $t = \infty$ or both are similar.  
\end{proof}

For any $\ell \in \zz$, $C^\ell$ is an almost gradation of the intersection of two linear filtrations, so by Proposition \ref{prop: almost gradations are independent} it is always an independent almost gradation. Hence the set of subrepresentations $\{ E_{s , t} \mid s , t \in \zz \cup \{\pm \infty \} \}$ are independent. We now show that this set of subrepresentations actually spans $V$ too. Since it is not true that all (even independent) almost gradations span (see Examples \ref{ex: closed under intersection nonspanning almost gradation} and \ref{ex: finite dim nonspanning almost gradation} below), we introduce a new condition that guarantees spanning and then show that the almost gradations $C^\ell$ satisfy this condition. 

\begin{definition}
We say that a linear filtration $F$ of $V$ is \emph{closed under intersection} if for all subsets $A \subseteq I$ there exists $i \in I$ such that $i \leq a$ for all $a \in A$ and $F_i = \bigcap_{a \in A} F_a$. 
\end{definition}

\begin{remark}\label{rem: conditions to check for closed under intersection}
\
\begin{enumerate}
    \item It suffices to check the condition for subsets $A \subseteq I$ which are \emph{saturated}, meaning that if $i \in I$ has the property that $a \leq i \leq b$ for $a , b \in A$ then $i \in A$.
    \item If $A$ contains its infimum then the condition is automatically satisfied by taking $i$ to be this infimum.
\end{enumerate} 
\end{remark}

\begin{lemma}\label{lem: closed under intersection in the fd case}
    If $V$ is a finite-dimensional vector space, $F: I \to \operatorname{Sub}(V)$ is a linear filtration, $A \subseteq I$ is any subset, then there exists $a_0 \in A$ such that $F_{a_0} = F_b$ for all $b \in A$ with $b \leq a_0$, and hence $\bigcap_{a \in A} F_a = F_{a_0}$.
\end{lemma}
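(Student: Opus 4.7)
The plan is to use a dimension-counting argument, exploiting the fact that $V$ is finite-dimensional so that the nonnegative integer $\dim F_b$ attains a minimum as $b$ ranges over $A$. Since $I$ is totally ordered and $F$ is order-preserving, $b \leq a$ in $I$ implies $F_b \subseteq F_a$, which is the monotonicity I will use throughout.

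First I would choose $a_0 \in A$ such that $\dim F_{a_0}$ is minimal among $\{\dim F_b \mid b \in A\}$; this minimum exists because the set of dimensions is a nonempty subset of $\{0, 1, \ldots, \dim V\}$. Then for any $b \in A$ with $b \leq a_0$, monotonicity of $F$ gives $F_b \subseteq F_{a_0}$, and minimality of $\dim F_{a_0}$ forces $\dim F_b \geq \dim F_{a_0}$. Combining the inclusion of subspaces with the reverse inequality on dimensions yields $F_b = F_{a_0}$, which is the first assertion.

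For the second assertion, I would note the trivial inclusion $\bigcap_{a \in A} F_a \subseteq F_{a_0}$ since $a_0 \in A$. For the reverse inclusion, fix any $a \in A$; because $I$ is totally ordered we have either $a_0 \leq a$ or $a \leq a_0$. In the first case monotonicity gives $F_{a_0} \subseteq F_a$; in the second case the previous paragraph gives $F_a = F_{a_0}$, so again $F_{a_0} \subseteq F_a$. Hence $F_{a_0} \subseteq \bigcap_{a \in A} F_a$, completing the proof.

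There is no serious obstacle here — the argument is almost entirely formal once one observes that finite-dimensionality makes the set of subspace dimensions well-ordered. The only subtle point worth writing carefully is that although $a_0$ minimizes the dimension only among $A$, the conclusion $F_{a_0} \subseteq F_a$ still holds for every $a \in A$ because totally ordered comparability with $a_0$ hands over either monotonicity or the already-established equality on the lower part.
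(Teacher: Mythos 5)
Your argument is correct and is essentially the paper's own proof: both extract a minimal element of $\{F_a \mid a \in A\}$ using finite-dimensionality (you via minimal dimension, the paper via minimality under inclusion, which is equivalent) and then use monotonicity to force equality below $a_0$. You additionally spell out the deduction of $\bigcap_{a \in A} F_a = F_{a_0}$ via total comparability with $a_0$, which the paper leaves implicit; that step is correct as written.
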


\begin{proof}

Since $V$ is finite-dimensional, the set of subspaces $\{ F_a \mid a \in A \}$ has a minimal element, say $F_{a_0}$. Then given any $b \in A$ with $b \leq a_0$ we have $F_b \subseteq F_{a_0}$, so by minimality it must be that $F_b = F_{a_0}$ as desired.
\end{proof}

\begin{proposition}\label{prop: fd intersection closed spans linear}
If $I$ is a totally ordered set and $F : I \to \text{Sub}(V)$ is a linear filtration of a finite-dimensional vector space $V$ which is closed under intersection, then every almost gradation of $F$ spans $F$. 
\end{proposition}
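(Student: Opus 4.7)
The plan is to fix an almost gradation $C$ of $F$ and strong-induct on $\dim F_p$ to show that $F_p = \sum_{q \leq p} C_q$ for every $p \in I$. The inclusion $\supseteq$ is automatic since $C_q \subseteq F_q \subseteq F_p$ whenever $q \leq p$, and the base case $\dim F_p = 0$ is trivial because then every $C_q$ with $q \leq p$ is zero. So the content lies entirely in proving $F_p \subseteq \sum_{q \leq p} C_q$ in the inductive step.

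For this, given $v \in F_p$, the key maneuver is to pin down the \emph{smallest} index at which $v$ appears. Let $A = \{q \leq p : v \in F_q\}$, which is nonempty since $p \in A$. The closed-under-intersection hypothesis furnishes some $i \in I$ with $i \leq a$ for every $a \in A$ and $F_i = \bigcap_{a \in A} F_a$; since $v$ lies in every $F_a$ we have $v \in F_i$, and together with $i \leq p$ this places $i \in A$, so $q_0 := i$ is the minimum of $A$. Using the almost-gradation identity, decompose $v = f + c$ with $f \in F_{< q_0}$ and $c \in C_{q_0}$. Minimality of $q_0$ forces $v \notin F_{< q_0}$, hence $c \neq 0$; in particular $F_{< q_0} \subsetneq F_{q_0} \subseteq F_p$, so $\dim F_{< q_0} < \dim F_p$.

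Because the filtration is linear, the subspaces $F_q$ with $q < q_0$ are totally ordered by inclusion, so the finite expression of $f$ as a sum of elements of such $F_q$'s collapses to a single term: there exists $q^\ast < q_0$ with $f \in F_{q^\ast}$, and then $\dim F_{q^\ast} \leq \dim F_{< q_0} < \dim F_p$. Applying the induction hypothesis to $q^\ast$ gives $f = \sum_{q \leq q^\ast} c_q$ with $c_q \in C_q$, and since $q^\ast < q_0 \leq p$ this places $f \in \sum_{q \leq p} C_q$; adding back $c \in C_{q_0}$ then yields $v \in \sum_{q \leq p} C_q$, closing the induction. I expect the main obstacle to be cleanly identifying the minimum $q_0$ of $A$: this is precisely where both hypotheses pull their weight, with closed-under-intersection supplying the candidate $i$ and finite-dimensionality (through minimality of $q_0$ together with $c \neq 0$) guaranteeing the strict dimension drop needed to invoke the induction hypothesis.
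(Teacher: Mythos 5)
Your proof is correct and follows essentially the same strategy as the paper's: both use the closed-under-intersection hypothesis to locate the minimal index $q_0 \leq p$ with $v \in F_{q_0}$, split $v = c + f$ with $0 \neq c \in C_{q_0}$ and $f \in F_{<q_0}$, and recurse on $f$. The only difference is the termination argument: the paper iterates the peeling process indefinitely and derives a contradiction from the independence of the almost gradation (Proposition \ref{prop: almost gradations are independent}) inside a finite-dimensional space, while you organize the recursion as a strong induction on $\dim F_p$ via the strict drop $\dim F_{q^\ast} \leq \dim F_{<q_0} < \dim F_{q_0} \leq \dim F_p$ --- an equivalent and equally valid use of finite-dimensionality.
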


\begin{proof}
Let $C$ be an almost gradation of $F$, let $p \in I$ and let $v \in F_p$. Consider the set $A = \{ a \leq p \mid v \in F_a \}$. By hypothesis there exists $i_1 \in I$ such that $i_1 \leq a$ for all $a \in A$ and $F_{i_1} = \bigcap_{a \in A} F_a$. Then $v \in F_{i_1}$ and by definition of an almost gradation we have $F_{i_1} = C_{i_1} \oplus F_{< i_1}$ so we can write $v = c_{i_1} + v_{i_2}$ where $c_{i_1} \in C_i$ and $v_{i_2} \in F_{<i}$. We claim that $c_{i_1} \neq 0$. Indeed, if this is the case then $v \in F_{< i_1} = \bigcup_{j < i_1} F_j$ so in fact there exists some $j < i_1$ such that $v \in F_j$. But this means that $j \in A$ contradicting that $i_1 \leq a$ for all $a \in A$, so it follows that $c_{i_1} \neq 0$. If $v_{i_2} = 0$, we're done. If not, replace $v$ by $v_{i_2}$ and repeat this process. If at some point $v_{i_n} = 0$, again we're done. If this never happens, we obtain a sequence $i_1 > i_2 > \ldots$ with $c_{i_n} \in C_{i_n} \smallsetminus \{ 0 \}$. Since $C$ is an almost gradation of a linear filtration, by Proposition \ref{prop: almost gradations are independent} it is independent, i.e. the family of subspaces $C_{i_1}, C_{i_2} \ldots$ is independent, hence $c_{i_1}, c_{i_2}, \ldots$ is an independent set, contradicting that $V$ is a finite-dimensional vector space. 
\end{proof}

\begin{corollary}\label{cor: closed under cap implies spanning ag}
If $I$ and $J$ are totally ordered sets and $E: I \to \text{Sub}(V)$, $F: J \to \text{Sub}(V)$ are filtrations of a finite-dimensional vector space $V$ which are both closed under intersection, then every almost gradation of $E \cap F: I \times J \to \text{Sub}(V)$ spans $E \cap F$. 
\end{corollary}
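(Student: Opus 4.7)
The plan is to reduce the two-dimensional spanning problem for $E \cap F$ to two successive applications of the linear case, Proposition \ref{prop: fd intersection closed spans linear}. Let $C$ be an almost gradation of $E \cap F$, and fix an arbitrary pair $(i_0, j_0) \in I \times J$; I want to show $E_{i_0} \cap F_{j_0} = \sum_{(i,j) \leq (i_0, j_0)} C_{(i,j)}$. The strategy is to use the $F$-direction to span each row first, then stack the rows using the $E$-direction.

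For the first (row) step, fix $i \leq i_0$ and pass to the quotient $Q_i = (E_i \cap F_{j_0})/(E_{<i} \cap F_{j_0})$. The filtration $j \mapsto E_i \cap F_j$ induces a linear filtration $\bar F_j = (E_i \cap F_j + E_{<i} \cap F_{j_0})/(E_{<i} \cap F_{j_0})$ on $Q_i$ for $j \leq j_0$, and the images $\bar C_{(i,j)}$ of $C_{(i,j)}$ in $Q_i$ are the candidate almost gradation. Using the formula $[E \cap F]_{<(i,j)} = E_i \cap F_{<j} + E_{<i} \cap F_j$ from Proposition \ref{prop: almost gradations are independent}, I would verify $\bar F_j = \bar F_{<j} \oplus \bar C_{(i,j)}$; the directness reduces to showing that any lift of an element of $\bar C_{(i,j)} \cap \bar F_{<j}$ can be written modulo $E_{<i} \cap F_{j_0}$ as an element of $[E \cap F]_{<(i,j)}$, so its $C_{(i,j)}$-component vanishes by independence. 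Since $Q_i$ is finite-dimensional, $\bar F$ is closed under intersection by Lemma \ref{lem: closed under intersection in the fd case}, and Proposition \ref{prop: fd intersection closed spans linear} yields $Q_i = \sum_{j \leq j_0} \bar C_{(i,j)}$, which lifts to $E_i \cap F_{j_0} = \sum_{j \leq j_0} C_{(i,j)} + E_{<i} \cap F_{j_0}$.

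For the second (column) step, set $C'_i = \sum_{j \leq j_0} C_{(i,j)}$ and consider the linear filtration $G_i = E_i \cap F_{j_0}$ on $E_{i_0} \cap F_{j_0}$. The previous step gives $G_i = C'_i + G_{<i}$, and I would upgrade this to an almost gradation by checking $C'_i \cap G_{<i} = 0$: given $z = \sum_{j \in J_0} c_{(i,j)} \in E_{<i} \cap F_{j_0}$ with $J_0$ finite, set $j_1 = \max J_0$; then $z \in E_{<i} \cap F_{j_1}$ and $\sum_{j < j_1} c_{(i,j)} \in E_i \cap F_{<j_1}$ both lie in $[E \cap F]_{<(i,j_1)}$, forcing $c_{(i,j_1)} \in C_{(i,j_1)} \cap [E \cap F]_{<(i,j_1)} = 0$, and iterating on $|J_0|$ yields $z = 0$. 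Because intersecting with $F_{j_0}$ commutes with intersecting the $E_i$, the filtration $G$ inherits closedness under intersection from $E$, so a second application of Proposition \ref{prop: fd intersection closed spans linear} yields $E_{i_0} \cap F_{j_0} = \sum_{i \leq i_0} C'_i = \sum_{(i,j) \leq (i_0, j_0)} C_{(i,j)}$. The main technical hurdle will be patient bookkeeping of the directness conditions for both almost gradations, and in particular using the constraint $j \leq j_0$ (which forces $F_j \cap F_{j_0} = F_j$) to push quotient computations back to the defining independence of $C$ against $[E \cap F]_{<(i,j)}$.
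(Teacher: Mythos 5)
Your overall architecture is sound and close in spirit to the paper's: both arguments reduce the product case to Proposition \ref{prop: fd intersection closed spans linear} by inducing a filtration and an almost gradation on a quotient by $E_{<i}$ (your $Q_i$ plays the role of the paper's $E_i/E_{<i}$), and your verifications that $\bar{C}_{(i,j)}$ complements $\bar{F}_{<j}$ and that $C'_i \cap G_{<i} = 0$ are both correct, using exactly the identity $[E\cap F]_{<(i,j)} = E_i\cap F_{<j} + E_{<i}\cap F_j$ as the paper does. Where you genuinely differ is the second stage: you package the row sums $C'_i = \sum_{j\le j_0} C_{(i,j)}$ into a second almost gradation of the linear filtration $G_i = E_i\cap F_{j_0}$ and apply the linear proposition once more, whereas the paper runs an elementwise recursion (peel the top $E$-layer off $v$, pass to $v'\in E_{<i}\cap F_b$, repeat, terminating by independence plus finite-dimensionality). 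Your version is arguably cleaner and makes the "iterated extension" structure explicit; the claim that $G$ inherits closedness under intersection from $E$ is fine, since $\bigcap_a(E_a\cap F_{j_0})=(\bigcap_a E_a)\cap F_{j_0}$ holds on the nose.

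There is, however, one genuine error in the first stage: you assert that $\bar{F}$ is closed under intersection ``since $Q_i$ is finite-dimensional, by Lemma \ref{lem: closed under intersection in the fd case}.'' That lemma does not say finite-dimensionality implies closedness under intersection; it only produces an element $a_0\in A$ at which $\bigcap_{a\in A}F_a$ is attained, and $a_0$ need not be a lower bound for $A$. Example \ref{ex: finite dim nonspanning almost gradation} is a one-dimensional counterexample to the implication you invoke, and if closedness were automatic in finite dimensions the corollary's hypothesis would be vacuous. The repair is the paper's: given $A\subseteq\{j\le j_0\}$, closedness of $F$ supplies $j^*\le a$ for all $a\in A$ with $F_{j^*}=\bigcap_{a\in A}F_a$, while Lemma \ref{lem: closed under intersection in the fd case} supplies $a_0\in A$ with $F_{a_0}=\bigcap_{a\in A}F_a=F_{j^*}$; because the intersection is attained at an actual element of $A$, it survives passage to the quotient, giving $\bar{F}_{j^*}=\bar{F}_{a_0}=\bigcap_{a\in A}\bar{F}_a$. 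Without attainment at some $a_0\in A$, the image of an intersection in a quotient can be strictly smaller than the intersection of the images, which is precisely the failure mode your citation glosses over. With that step corrected, your proof goes through.
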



\begin{proof}
Let $C : I \times J \to \text{Sub}(V)$ be an almost gradation of $E \cap F$ and let $v \in E_a \cap F_b$. Since $E$ is closed under intersection, there exists $i \in I$ such that $v \in E_i$ but $v \notin E_{< i}$. Then the residue $\overline{v} \in E_i / E_{<i}$ is nonzero. 

We claim that the function $\overline{F}^i : J \to \text{Sub}(E_i / E_{<i})$ defined by $j \mapsto \overline{E_i \cap F_j}$ is a filtration which is closed under intersection (here $\overline{E_i \cap F_j}$ denotes the image of $E_i \cap F_j$ in the quotient $E_i / E_{<i}$). First of all, this function is a filtration because on the one hand it is order-preserving since both intersecting with $E_i$ and projecting to the quotient preserve containment, and on the other hand since $(F , J)$ is a filtration of $V$ and hence there exists $j_1 \in J$ such that $F_{j_1} = V$, and hence $\overline{F}^i_{j_1} = \overline{E_i \cap F_{j_1}} = \overline{E_i} = E_i / E_{<i}$. Second of all, this filtration is closed under intersection because given any subset $A \subseteq J$, since $F$ is closed under intersection there exists $j \in J$ with $j \leq a$ for all $a \in A$ with the property that $F_j = \bigcap_{a \in A} F_a$, which implies that $E_i \cap F_j = \bigcap_{a \in A} E_i \cap F_a$. Note that we automatically have $\overline{E_i \cap F_j} \subseteq \bigcap_{a \in A} \overline{E_i \cap F_a}$. However because $V$ is finite-dimensional, by Lemma \ref{lem: closed under intersection in the fd case}, there exists $a_0 \in A$ such that $F_j = F_b$ for all $b \leq a_0$. Hence in fact we have $F_j = \bigcap_{a \in A} F_a = F_{a_0}$, which implies $E_i \cap F_j = E_i \cap F_{a_0}$, so $\overline{E_i \cap F_j} = \overline{E_i \cap F_{a_0}}$, and thus $\overline{E_i \cap F_j} = \bigcap_{a \in A} \overline{E_i \cap F_a}$ as desired. 

We now show that $\overline{E_i \cap F_{<j}} = \overline{F}^i_{<j}$. Note that if $\{ j' \in J \mid j' < j \} = \emptyset$, then $F_{<j} = 0$ and $\overline{F}^i_{<j} = 0$, so this equality is trivial. On the other hand, if $\{ j' \in J \mid j' < j \} \neq \emptyset$, then because $J$ is totally ordered, we have that $F_{<j} = \bigcup_{j' < j} F_{j'}$ and similarly $\overline{F}^i_{<j} = \bigcup_{j' < j} \overline{E_i \cap F_{j'}}$, therefore $E_i \cap F_{<j} = E_i \cap \bigcup_{j' < j} F_{j'} = \bigcup_{j' < j} E_i \cap F_{j'}$, and since the projection map $E_i \to E_i / E_{<i}$ preserves unions, the desired equality follows. 

Now we claim that the function $\overline{C}^i : J \to \text{Sub}(E_i / E_{<i})$ which sends $j \mapsto \overline{C_{i , j}}$ is an almost gradation of $\overline{F}^i$. One the one hand, given any $j \in J$ and $\overline{v} \in \overline{F}^i_j = \overline{E_i \cap F_j}$, because $C$ is an almost gradation of $E \cap F$, by Equation \ref{eq: intersection of linear less than} we have $E_i \cap F_j = [E_{<i} \cap F_j + E_i \cap F_{<j}] \oplus C_{i , j}$, hence we can write $v = c + u + w$ where $c \in C_{i , j}$, $u \in E_{<i} \cap F_j$, and $w \in E_i \cap F_{<j}$. Taking residues mod $E_{<i}$ yields $\overline{v} = \overline{c} + \overline{w}$, where $\overline{c} \in \overline{C}^i$ and $\overline{w} \in \overline{E_i \cap F_{<j}}$. Recall that we just showed this latter space is equal to $\overline{F}^i_{<j}$, hence we have $\overline{F}^i_j = \overline{F}^i_{<j} + \overline{C}^i$. On the other hand, we must show that $\overline{F}^i_{<j}$ and $\overline{C}^i$ are independent. Suppose that $\overline{w} \in \overline{F}^i_{<j}$ and $\overline{c} \in \overline{C}^i$ satisfy $\overline{w} + \overline{c} = 0$. Then $w \in E_i \cap F_{<j}$ and $c \in C_{i , j}$ and $w + c \in E_{<i} \cap F_j$. By independence of $C_{i , j}$ and $E_{<i} \cap F_j + E_i \cap F_{<j}$ we have $c = 0$, hence $\overline{c} = 0$ and $\overline{w} = 0$ as desired.

Since $\overline{C}^i$ is an almost gradation of $\overline{F}^i$, by Proposition \ref{prop: fd intersection closed spans linear}, $\overline{C}^i$ spans, so since $\overline{v} \in \overline{E_i \cap F_b} = \overline{F}^i_b$, we can write $\overline{v} = \overline{c_{i, j_1}} + \ldots + \overline{c_{i , j_{n_i}}}$ where $\overline{c_{i , j_\ell}} \in \overline{C_{i , j_\ell}}$, $j_\ell \leq b$, and we can assume that $c_{i, j_1}, \ldots, c_{i , j_{n_i}}$ are nonzero because $v \notin E_{<i}$. Therefore $v = c_{i, j_1} + \ldots + c_{i , j_{n_i}} + v'$ where $v' \in E_{< i}$. Because $v , c_{i , j_\ell} \in F_b$, we have $v' \in F_b$ as well. If $v' = 0$ we're done. If not, we may replace $v$ by $v'$ and repeat the process. As in the case of Proposition \ref{prop: fd intersection closed spans linear}, because $V$ is finite-dimensional, at some point in this process we must obtain $v' = 0$, so $v \in \sum_{(i , j) < (a, b)} C_{i , j}$ as desired.

\end{proof}

\begin{remark}
In \cite{gallup2024decompositions} we showed that if $P$ is any well-founded partially ordered set then every almost gradation of any poset filtration $F$ of $P$ spans $F$.  
\end{remark}

We now give two examples that show the necessity of the conditions of $V$ being finite-dimensional and $F$ being closed under intersection in Proposition \ref{prop: fd intersection closed spans linear}.  

\begin{example}\label{ex: closed under intersection nonspanning almost gradation}
Let $V$ be the uncountable dimensional vector space $\{ ( a_0, a_1, \ldots ) : a_i \in \mathbb{F} \}$, let $I = \mathbb{Z}_{\leq 0} \cup \{ -\infty \}$ be the totally ordered set with the usual order. Define $F: I \to \text{Sub}(V)$ by setting $F_i = \{ ( a_0, a_1, \ldots ) \in V \mid a_0 = \ldots = a_{i - 1} = 0 \}$ for $i \in \mathbb{Z}_{\leq 0}$ and $F_{-\infty} = 0$. Then $F_i / F_{< i} \cong \mathbb{F}$, hence if $C$ is any almost gradation of $F$, the sum of all $C_i$ has countable dimension, and hence cannot span $F_0$. Note that in this case the only saturated subset of $I$ which is does not contain its infimum is $\mathbb{Z}_{\leq 0}$, but the element $-\infty \in I$ satisfies the property that $F_{-\infty} = \bigcap_{ i \in \mathbb{Z}_{\leq 0} } F_i$, hence $F$ is closed under intersection. 
\end{example} 

\begin{example}\label{ex: finite dim nonspanning almost gradation}
     Let $V$ be a $1$-dimensional vector space. Define $I = \mathbb{Z}_{\leq 0} \cup \{ -\infty \}$ to be the totally ordered set with the usual order, and $F: I \to \text{Sub}(V)$ to send $i \mapsto V$ if $i \in \mathbb{Z}_{\leq 0}$ and $-\infty \mapsto 0$, then every almost gradation $C$ of $F$ has the property that $C_i = 0$ for all $i \in I$, so clearly $C$ doesn't span $F$. Note that although $V$ is finite-dimensional, this filtration is not closed under intersection since the only lower bound for the subset $\zz_{\leq 0} \subseteq I$ is $-\infty$, yet $F_{-\infty} = 0 \neq \bigcap_{i \in \zz_{\leq 0}} F_i = V$. 
\end{example}

\begin{lemma}\label{lem: intersection of transports is set of strands}
    If $V$ is locally finite-dimensional, then for all $i$ we have that $L^i_{n_{-\infty}} = \bigcap_{j \in \zz_{\leq 0}} L^i_{n_j}$, and similarly for the filtration $R_i$.
\end{lemma}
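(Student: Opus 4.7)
The forward inclusion $L^i_{n_{-\infty}} \subseteq \bigcap_{j \in \zz_{\leq 0}} L^i_{n_j}$ is immediate and was in fact already established within the proof of the first proposition of this section: any infinite strand $(v = v_0, v_1, \ldots)$ witnessing $v \in L^i_{n_{-\infty}}$ gives, by truncation, membership in each $L^i_{n_j}$. The content of the lemma is therefore the reverse inclusion, and this is where local finite-dimensionality enters, serving as the analog of finite branching in a K\"onig-type argument.

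My plan for the reverse inclusion is to fix $v \in \bigcap_{j \in \zz_{\leq 0}} L^i_{n_j}$ and construct an infinite strand $(v, v_1, v_2, \ldots)$ of the journey $y_i, y_{i-1}, y_{i-2}, \ldots$ by a stabilization argument. For each $k \geq 0$, let
\[ P_k = \{(v_0, v_1, \ldots, v_k) \in V(y_i) \times V(y_{i-1}) \times \cdots \times V(y_{i-k}) \maps v_0 = v \text{ and the strand relations hold}\}. \]
Each strand relation is linear in the adjacent coordinates, so $P_k$ is an affine subspace of the finite-dimensional ambient space $\prod_{s=0}^{k} V(y_{i-s})$. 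The hypothesis that $v \in L^i_{n_{-k}}$ says precisely that $P_k$ is nonempty for every $k \geq 0$.

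For $m \geq k$, let $\pi_{m,k} \maps P_m \to P_k$ denote truncation to the first $k+1$ coordinates and set $P_k^m = \pi_{m,k}(P_m) \subseteq P_k$. This is a decreasing chain of affine subspaces inside a finite-dimensional space, so it stabilizes; let $P_k^\infty$ denote its intersection, which equals $P_k^m$ for all sufficiently large $m$ and is in particular a nonempty affine subspace. The key step will be to verify that the restricted truncation $\pi_{k+1,k} \maps P_{k+1}^\infty \to P_k^\infty$ is surjective. The inclusion $\pi_{k+1,k}(P_{k+1}^\infty) \subseteq P_k^\infty$ is immediate from the definitions; for the reverse, given $p \in P_k^\infty$, I would observe that the fibers $\pi_{k+1,k}^{-1}(p) \cap P_{k+1}^m$ form a decreasing chain of nonempty affine subspaces (nonempty because $p$ extends to some length-$m$ partial strand, whose length-$(k+1)$ truncation lies in the fiber), and this chain again stabilizes by finite-dimensionality to a nonempty subset of $\pi_{k+1,k}^{-1}(p) \cap P_{k+1}^\infty$.

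With surjectivity of the tower $\cdots \to P_{k+1}^\infty \to P_k^\infty \to \cdots \to P_0^\infty = \{(v)\}$ in hand, the infinite strand is built by a straightforward induction: lift $(v) \in P_0^\infty$ successively through the surjections to obtain compatible elements $(v, v_1, \ldots, v_k) \in P_k^\infty$ for every $k$, and collect them into a single sequence $(v, v_1, v_2, \ldots)$, which is then an infinite strand by construction, placing $v$ in $L^i_{n_{-\infty}}$. The argument for $R^i$ is identical after replacing the journey $y_i, y_{i-1}, \ldots$ with $y_i, y_{i+1}, \ldots$. The main obstacle, and the precise reason local finite-dimensionality is indispensable, is the double stabilization of the nested chains $P_k^m$ and of the nested fibers above a given $p$; without finite-dimensionality one cannot exclude the pathology in which $v$ has arbitrarily long partial strand extensions yet no single coherent infinite extension.
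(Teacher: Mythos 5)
Your proof is correct and takes essentially the same approach as the paper's: both arguments exploit local finite-dimensionality to stabilize a decreasing chain and then lift successively to assemble an infinite strand. The paper stabilizes the transported subspaces $L^{i'}_{n_j}$ at a succession of vertices $y_i, y_{i-J_0}, y_{i-J_0-J_1}, \ldots$ and lifts through them, whereas you package the identical idea as a Mittag--Leffler argument on the affine spaces of partial strands; the two formulations are interchangeable.
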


\begin{proof}
    We prove the statement for $L$ only, the case of $R$ being similar. The containment $(\subseteq)$ is clear. For ease of notation, let $T^i_{n_{-\infty}} = \bigcap_{j \in \zz_{\leq 0}} L^i_{n_j}$. Since $V(y_i)$ is finite-dimensional, the decreasing chain of subspaces $L^i_{n_0} \supseteq L^i_{n_{-1}} \supseteq L^i_{n_{-2}} \supseteq \ldots$ must stabilize, say at $J_0$, so that $L^i_{n_j} = L^i_{n_{j - 1}}$ for all $j \leq J_0$. Therefore $T^i_{n_{-\infty}} = L^i_{n_{J_0}}$. But notice that the same statement holds for $L^{i - {J_0}}$, i.e. there exists some $J_1$ such that $T^{i - J_0}_{n_{-\infty}} = L^{i - J_0}_{n_{J_1}}$. But the transport of $L^{i - J_0}_{n_{J_1}}$ to $x_i$ is equal to $L^i_{n_{J_0 - J_1}}$ which we have just shown is equal to $T^i_{n_{-\infty}}$. Continuing in this way we obtain a sequence $J_0, J_1, J_2, \ldots$ in in $\mathbb{N}$ such that $T^i_{n_{-\infty}}$ is equal to the transport of $T^{i - J_0 - J_1 - \ldots - J_\ell}_{n_{-\infty}}$ to $y_i$. So given any $v = w_0 \in T^i_{n_{-\infty}}$ there exists $w_1 \in T^{i- J_0}_{n_{-\infty}}$ whose transport to $y_i$ is $w_0$, and similarly $w_2 \in T^{i - J_0 - J_1}_{n_{-\infty}}$ whose transport to $y_{i - J_0}$ is $w_1$. So we obtain a sequence $v = w_0, w_1, w_2, \ldots$ which is a subsequence of a strand $(v_0, v_1, \ldots)$ which is contained in $L^i_{n_{-\infty}}$ by definition. 
\end{proof}

\begin{corollary}\label{cor: L and R closed under cap}
If $V$ is a locally finite-dimensional representation of an $A_{\infty, \infty}$ quiver $\Omega$ then for all $\ell \in \zz$, the filtrations $L^\ell$ and $R^\ell$ of $V(y_\ell)$ are closed under intersection. 
\end{corollary}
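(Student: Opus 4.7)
The plan is to invoke Remark \ref{rem: conditions to check for closed under intersection} and reduce to verifying the closure condition only for saturated subsets $A \subseteq I$ that do not contain their infimum in $I$. The empty subset is handled trivially by taking $i = n_0$, since $L^\ell_{n_0} = V(y_\ell)$ realizes the empty intersection. The remainder of the proof then splits naturally into a piece of order-theoretic bookkeeping—classifying the relevant saturated subsets of $I$—and a single application of Lemma \ref{lem: intersection of transports is set of strands}.

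For the order analysis, I would observe that every element $m_i$ with $i \geq 0$ has immediate successor $m_{i+1}$ in $I$, and every element $n_j$ with $j \leq -1$ has immediate successor $n_{j+1}$. Consequently, if $A$ is saturated and $\inf A \in \{m_i : i \geq 0\} \cup \{n_j : j \leq -1\}$, then the immediate successor of $\inf A$ is already a lower bound of $A$, contradicting that $\inf A$ is the \emph{greatest} lower bound, unless $\inf A \in A$. This leaves only the possibility $\inf A = n_{-\infty}$ with $n_{-\infty} \notin A$. Such an $A$ cannot contain any $m_i$ (that would force $\inf A \leq m_i < n_{-\infty}$), so $A \subseteq \{n_j : j \leq 0\}$ is a saturated subset unbounded below; saturation then forces $A = \{n_j : j \leq j_1\}$ for some $j_1 \leq 0$.

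To finish the proof for this one remaining case, I would take $i = n_{-\infty}$, which certainly satisfies $n_{-\infty} \leq n_j$ for all $j$. Since the chain $L^\ell_{n_0} \supseteq L^\ell_{n_{-1}} \supseteq L^\ell_{n_{-2}} \supseteq \cdots$ is decreasing in $j$, the cofinal tail satisfies $\bigcap_{j \leq j_1} L^\ell_{n_j} = \bigcap_{j \leq 0} L^\ell_{n_j}$, and Lemma \ref{lem: intersection of transports is set of strands} identifies this intersection with $L^\ell_{n_{-\infty}}$, as required. The argument for $R^\ell$ is entirely symmetric, using the $R$-version of the same lemma (whose proof was indicated to be identical). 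I do not anticipate any genuine obstacle: the real content is already packaged into Lemma \ref{lem: intersection of transports is set of strands}, and what remains is pure order-theoretic bookkeeping of the saturated subsets of $I$.
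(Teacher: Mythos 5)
Your proposal is correct and follows essentially the same route as the paper: reduce via Remark \ref{rem: conditions to check for closed under intersection} to saturated subsets without a minimum, observe these are only the lower tails of $\{n_j : j \leq 0\}$ with infimum $n_{-\infty}$, and conclude with Lemma \ref{lem: intersection of transports is set of strands}. Your order-theoretic bookkeeping is in fact a bit more careful than the paper's (which only names the single subset $\{\ldots, n_{-1}, n_0\}$), but the substance is identical.
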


\begin{proof}
We prove only the case for $L^\ell$, the case of $R^\ell$ being similar. By Remark \ref{rem: conditions to check for closed under intersection} it suffices to check only saturated subsets of $I$ which do not have a lower bound. The only such subset is $\{ \ldots, n_{-2}, n_{-1}, n_0 \}$. On the one hand, $n_{-\infty} \leq n_i$ for all $i \in \zz_{\leq 0}$, and on the other hand by Lemma \ref{lem: intersection of transports is set of strands} we have that $L^\ell_{n_{-\infty}} = \bigcap_{j \in \zz_{\leq 0}} L^\ell_{n_j}$ as desired. 
\end{proof}

\begin{theorem}
    Let $V$ be a locally finite-dimensional representation of an $A_{\infty, \infty}$ quiver and let $C^0$ be any almost gradation of $L^0 \cap R^0$. Let $D^{b , c}_{s , t}$ and $E_{s, t}$ be the corresponding subrepresentations of $V$ which exist by Proposition \ref{prop: Ds are subreps}. Then $V = \bigoplus_{s \leq t} E_{s , t}$.
\end{theorem}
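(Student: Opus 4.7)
The plan is to establish the decomposition vertex by vertex: show that for each $\ell \in \zz$ one has $V(y_\ell) = \bigoplus_{s \leq t} E_{s,t}(y_\ell)$. Since Proposition \ref{prop: Ds are subreps} already ensures that every $E_{s,t}$ is a subrepresentation of $V$, the pointwise decomposition automatically yields the representation-level statement $V = \bigoplus_{s \leq t} E_{s,t}$.

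Fix $\ell \in \zz$. I would first show that the almost gradation $C^\ell$ of the poset filtration $L^\ell \cap R^\ell$ is both independent and spanning. Independence follows immediately from Proposition \ref{prop: almost gradations are independent}(2), since $L^\ell$ and $R^\ell$ are linear filtrations of $V(y_\ell)$. For spanning, local finite-dimensionality makes $V(y_\ell)$ a finite-dimensional vector space, and Corollary \ref{cor: L and R closed under cap} tells us that both $L^\ell$ and $R^\ell$ are closed under intersection, so Corollary \ref{cor: closed under cap implies spanning ag} applies and gives that $C^\ell$ spans $L^\ell \cap R^\ell$. Since $L^\ell_{n_0} = R^\ell_{n_0} = V(y_\ell)$, this produces
\[
V(y_\ell) = \bigoplus_{(p,q) \in I \times I} C^\ell_{p,q}.
\]

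Next I would re-index this decomposition using Lemmas \ref{lem: C ell is one of the Ds} and \ref{lem: at most one nonzero D}. By Lemma \ref{lem: C ell is one of the Ds}, each summand $C^\ell_{p,q}$ equals $W(y_\ell)$ for a unique subrepresentation $W$ drawn from the list $D^{b,c}_{s,t}$, $D^{m,n}_{s,\infty}$, $D^{n,n}_{s,\infty}$, $D^{n,m}_{-\infty,t}$, $D^{n,n}_{-\infty,t}$, $D^{n,n}_{-\infty,\infty}$, and the assignment $(p,q) \mapsto (s,t,b,c)$ furnishes a bijection between $I \times I$ and the set of such tuples whose value at $y_\ell$ is potentially nonzero, namely those with $s \leq \ell \leq t$ (allowing $s = -\infty$ or $t = +\infty$). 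For $(s,t)$ with $\ell < s$ or $\ell > t$, the defining formulas give $D^{b,c}_{s,t}(y_\ell) = 0$ for every $b,c$ and hence $E_{s,t}(y_\ell) = 0$. For $(s,t)$ with $s \leq \ell \leq t$, Lemma \ref{lem: at most one nonzero D} ensures that at most one of the $D^{b,c}_{s,t}$'s (over the relevant $b, c \in \{m,n\}$) is nonzero as a subrepresentation, so $E_{s,t}(y_\ell)$ coincides with a single $C^\ell_{p,q}$, with the remaining terms in its defining sum being zero. Regrouping the direct sum above by the resulting $(s,t)$ therefore yields $V(y_\ell) = \bigoplus_{s \leq t} E_{s,t}(y_\ell)$.

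The only real obstacle is the combinatorial bookkeeping of the re-indexing: one must read off from Lemma \ref{lem: C ell is one of the Ds} exactly which $(s,t,b,c)$ corresponds to each $(p,q)$ at the chosen $\ell$, and then invoke Lemma \ref{lem: at most one nonzero D} to confirm that the (at most four) $D^{b,c}_{s,t}$ contributions for fixed $(s,t)$ collapse to a single nonzero $C^\ell_{p,q}$, so that no $(p,q)$ is over-counted as $(s,t)$ varies. Once this combinatorial identification is pinned down, the substantive work has already been done: spanning comes from Corollary \ref{cor: closed under cap implies spanning ag}, independence from Proposition \ref{prop: almost gradations are independent}, and the vertex-wise decomposition lifts to a direct sum decomposition of representations by Proposition \ref{prop: Ds are subreps}.
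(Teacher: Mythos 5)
Your proposal is correct and follows essentially the same route as the paper: spanning of each $C^\ell$ via Corollaries \ref{cor: L and R closed under cap} and \ref{cor: closed under cap implies spanning ag}, independence via Proposition \ref{prop: almost gradations are independent}, and the identification of the summands with the $E_{s,t}$ via Lemmas \ref{lem: C ell is one of the Ds} and \ref{lem: at most one nonzero D}. The only difference is presentational — you organize the argument as an explicit vertex-by-vertex re-indexing, whereas the paper phrases it as spanning and independence of the family of subrepresentations — and this does not change the substance.
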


\begin{proof}
For every $\ell \in \zz$, by Corollary \ref{cor: L and R closed under cap}, $L^\ell$ and $R^\ell$ are closed under intersection, hence by Corollary \ref{cor: closed under cap implies spanning ag}, $C^\ell$ spans $V(y_\ell)$. By Lemma \ref{lem: C ell is one of the Ds}, for $s , t \in \zz$ and $b , c \in \{ m , n \}$, the subrepresentations $D^{b , c}_{s , t}$, $D^{b , n}_{s , \infty}$, $D^{n , c}_{-\infty , t}$, and $D^{n , n}_{-\infty , \infty}$ span $V$. Hence clearly the subrepresentations $\{ E_{s , t} \mid -\infty \leq s \leq t \leq \infty \}$ span $V$ too. 

On the other hand, by Proposition \ref{prop: almost gradations are independent} the family of subspaces $\{C^{\ell}_{p , q} \mid p , q \in I \}$ is independent, hence the family of subrepresentations $\{ D^{b , c}_{s , t}, D^{b , n}_{s , \infty}, D^{n , c}_{-\infty , t}, D^{n , n}_{-\infty , \infty} \mid s , t \in \zz, b , c \in \{ m , n \} \}$ is also independent. By Lemma \ref{lem: at most one nonzero D} for any $-\infty \leq s \leq t \leq \infty$, $E_{s , t}$ is equal to one of $D^{m ,m}_{s , t}$, $D^{m , n}_{s , t}$, $D^{n , m}_{s , t}$, $D^{n , n}_{s , t}$, $D^{m , n}_{s , \infty}$, $D^{n , n}_{s , \infty}$, $D^{n , m}_{-\infty , t}$, $D^{n , n}_{-\infty , t}$, $D^{n , n}_{-\infty , \infty}$, hence the family $\{ E_{s , t} \mid -\infty \leq s \leq t \leq \infty \}$ is independent as well.
\end{proof}

As a consequence, we recover the following analog of \cite[Theorem 1]{gallup2024decompositions} for locally finite-dimensional representations of a (not necessarily eventually outward) $A_{\infty, \infty}$ quiver which was proved in \cite{bautista-liu-paquette2011}.  

\begin{corollary}
Let $\Omega$ be a (not necessarily eventually outward) $A_{\infty, \infty}$ quiver. Then the category $\operatorname{rep}(\Omega)$ of locally finite-dimensional representations of $\Omega$ is infinite Krull-Schmidt. Furthermore, the indecomposables are all thin and taking supports gives a bijection between these indecomposables and the connected subquivers of $\Omega$. 
\end{corollary}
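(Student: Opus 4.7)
The plan is to derive this corollary directly from the decomposition theorem just proved. Given any locally finite-dimensional representation $V$ of $\Omega$, I would first choose an arbitrary almost gradation $C^0$ of $L^0 \cap R^0$ on $V(y_0)$; such an almost gradation exists because one can pick a direct sum complement $C^0_{p,q}$ of $[L^0 \cap R^0]_{<(p,q)}$ inside $[L^0 \cap R^0]_{(p,q)}$ for each pair $(p,q) \in I \times I$ (by basis extension, since each $V(y_0) \cap L^0_p \cap R^0_q$ is finite-dimensional). Applying the preceding theorem then yields $V = \bigoplus_{s \leq t} E_{s,t}$. By the isotypic proposition, each nonzero $E_{s,t}$ is a direct sum of copies of a single thin indecomposable representation $M_{s,t}$ of $\Omega$ supported on the full subquiver with vertex set $\{y_\ell \mid \ell \in [s,t] \cap \zz\}$; concretely, pick any $\ell$ in the support, choose a basis of $E_{s,t}(y_\ell)$, and transport it along the arrows via the isomorphisms $E_{s,t}(a_k)$ guaranteed by Proposition \ref{prop: Ds are subreps}. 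Substituting these refinements into the decomposition of $V$ expresses $V$ as a direct sum of thin indecomposables, establishing that $\operatorname{rep}(\Omega)$ is infinite Krull-Schmidt.

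To show that every indecomposable representation of $\Omega$ is thin, I would apply the construction above to an arbitrary indecomposable $W$, obtaining a decomposition of $W$ as a direct sum of thin summands. Indecomposability forces exactly one of these summands to be nonzero and moreover to equal a single copy of some $M_{s,t}$, so $W \cong M_{s,t}$ is thin.

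For the bijection with connected subquivers, I would note that the connected subquivers of an $A_{\infty, \infty}$ quiver are exactly the full subquivers on vertex sets of the form $\{y_\ell \mid s \leq \ell \leq t\}$ for $s,t \in \zz \cup \{\pm \infty\}$ with $s \leq t$. For each such interval there is, up to isomorphism, a unique thin representation whose arrow maps are all nonzero---place $\ff$ at each vertex in the support and take the identity on each arrow between consecutive vertices---and this representation is exactly $M_{s,t}$. The assignment $M_{s,t} \mapsto \{y_\ell \mid s \leq \ell \leq t\}$ therefore gives the desired bijection between isomorphism classes of indecomposables and connected subquivers. The bulk of the work has already been done in the decomposition theorem, so there is no real obstacle remaining; the only minor technical point is the refinement of each isotypic $E_{s,t}$ into individual copies of $M_{s,t}$, which is routine given that the arrow maps of $E_{s,t}$ on its support are isomorphisms.
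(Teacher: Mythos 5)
Your proposal is correct and follows essentially the same route as the paper, which states this corollary as an immediate consequence of the decomposition theorem together with the proposition that each $E_{s,t}$ is an isotypic sum of thin indecomposables supported on an interval. Your only additions beyond the paper's implicit argument are routine bookkeeping (existence of the almost gradation $C^0$ by choosing finite-dimensional complements, refining each isotypic $E_{s,t}$ into thin copies, and matching intervals with connected subquivers), all of which are sound.
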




\section{Locally Finite-dimensional Indecomposables of Finitely Branching Tree Quivers}\label{sec: tails}

Suppose that $x_0, x_1, x_2, \ldots$ is a tail of a quiver $\Omega$ and that $V$ is a locally finite-dimensional representation of $\Omega$. We use $\Omega$ to define an $A_{\infty, \infty}$ quiver $\Omega'$ whose vertex set is $\{ y_n \mid n \in \mathbb{Z} \}$ and where there is an arrow from $y_n$ to $y_{n - 1}$ for $n \leq 0$ and if $n, m \geq 0$ then the set of arrows from $y_n$ to $y_m$ is exactly the set of arrows from $x_n$ to $x_m$.

\begin{example}\label{ex: D infinity tail}
Let $\Omega$ be the following representation of $D_\infty$ with all arrows pointing to the right. 

\begin{center}
 \begin{tikzcd}
 & z_1 \arrow[rd] & & & &  \\
\Omega \quad = &  & x_0  \arrow[r] & x_1 \arrow[r] &
  x_2 \arrow[r] & \quad \cdots \quad  \\
& z_2 \arrow[ru]  & & & & 
\end{tikzcd}
\end{center}

Then $x_0, x_1, x_2, \ldots$ is a tail of $\Omega$ and the corresponding $\Omega'$ is the following representation of $A_{\infty, \infty}$ with all arrows pointing away from $y_0$. 

\begin{center}
\begin{tikzcd}
\Omega' = \quad  \cdots \quad & y_{-2} \arrow[l] & y_{-1} \arrow[l]  &   y_0 \arrow[l] \arrow[r] & y_1  \arrow[r] & y_2 \arrow[r] & \quad \cdots 
\end{tikzcd}
\end{center}
\end{example}

We also use $V$ to define a representation $V'$ of $\Omega'$ by setting $V'(y_n) = V(x_n)$ for $n \geq 0$ and $V'(y_m) = V(x_0)$ for $m \leq 0$. Furthermore if $a$ is an arrow in $\Omega$ which connects $y_{n}$ and $y_m$ for $n , m \geq 0$ then by definition there is a corresponding arrow connecting $x_n$ and $x_m$ and we define $V'(a) = V(a)$. If $a$ is an arrow connecting $y_n$ and $y_m$ for $n , m \leq 0$ we define $V'(a)$ to be the identity map (since in that case $V'(y_n) = V'(y_m) = V(x_0)$. 

\begin{example}
Let $\Omega$, the tail $x_0, x_1, \ldots$, and $\Omega'$ be as in Example \ref{ex: D infinity tail} and let $V$ be the indecomposable representation of $\Omega$ pictured below which has dimension $2$ at $x_0$, dimension $1$ at $x_1$, and is zero at $x_n$ for $n \geq 2$ (note that this representation necessarily has dimension $1$ at $z_1$ and $z_2$).  

\begin{center}
 \begin{tikzcd}
 & \mathbb{F} \arrow[rd] & & & &  \\
V \quad = &  & \mathbb{F}^2  \arrow[r] & \mathbb{F} \arrow[r] &
  0 \arrow[r] & \quad \cdots \quad  \\
& \mathbb{F} \arrow[ru]  & & & & 
\end{tikzcd}
\end{center}

Then the representation $V'$ of $\Omega'$ is the same as $V$ for all vertices and arrows to the right of $y_0$, and to the left is just infinitely many copies of a two-dimensional vector space with all arrows mapping to isomorphisms as shown below.   

\begin{center}
\begin{tikzcd}
V' = \quad  \cdots \quad & \mathbb{F}^2 \arrow[l, "\sim" '] & \mathbb{F}^2 \arrow[l, "\sim" ']  &  \mathbb{F}^2 \arrow[l, "\sim" '] \arrow[r] & \mathbb{F}  \arrow[r] & 0 \arrow[r] & \quad \cdots 
\end{tikzcd}
\end{center}
\end{example}

Given any locally finite-dimensional representation $V$ of a quiver $\Omega$ with a tail $x_0, x_1, \ldots$, let $\Omega'$ and $V'$ be as described above.

\begin{corollary}\label{cor: maps on tails must be inj or surj}
    Suppose $\Omega$ is a quiver, $z_0, z_1, \ldots$ is a tail of $\Omega$, and $V$ is a locally finite-dimensional indecomposable representation of $\Omega$ with $V(z_0) \neq 0$. Let $e$ be the unique arrow between $x_0: = z_n$ and $x_1:= z_{n + 1}$ for some $n \geq 0$. If $e$ points $x_0 \to x_1$ then $V(e)$ must be surjective, and if $e$ points $x_1 \to x_0$ then $V(e)$ must be injective. 
\end{corollary}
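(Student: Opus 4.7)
The plan is to apply the $A_{\infty,\infty}$ decomposition theorem just established to the auxiliary representation $V'$, and to translate a failure of surjectivity (respectively injectivity) of $V(e)$ into a nonzero direct summand of $V$ supported on interior tail vertices, contradicting indecomposability. I will write out the case $e : x_0 \to x_1$; the opposite orientation is symmetric.

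First, I would suppose for contradiction that $V(e) = V'(a_0)$ is not surjective and apply the decomposition theorem to $V'$, using any almost gradation $C^0$ of $L^0 \cap R^0$, to obtain $V' = \bigoplus_{s \leq t} E_{s,t}$. The crucial observation is that the filtration $L^0$ at $y_0 = x_0$ is ``trivial'' in the following sense: because each $y_m$ with $m \leq 0$ carries a copy of $V(x_0)$ linked to its neighbors by identity maps, $L^0_{m_j} = 0$ for every $j \geq 0$, and every vector of $V(x_0)$ admits the constant leftward strand, so $L^0_{n_{-\infty}} = V(x_0)$. Hence $L^0_{<n_{-\infty}} = 0$, which forces $C^0_{p,q} = 0$ for all $p < n_{-\infty}$. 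The only summands of the decomposition nonzero at $y_0$ are therefore of the form $E_{-\infty, t}$ for $t \in \mathbb{Z}_{\geq 0} \cup \{\infty\}$.

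Next, by Proposition \ref{prop: Ds are subreps}, $V'(a_0)$ restricts to an isomorphism on each $E_{-\infty, t}$ with $t \geq 1$ and to zero on $E_{-\infty, 0}$, so $\operatorname{im} V(e) = \bigoplus_{t \geq 1 \text{ or } t = \infty} E_{-\infty, t}(y_1)$. Since the only other summands of the decomposition that can be nonzero at $y_1$ are of the form $E_{1, t}$, the complement of the image in $V(y_1)$ equals $\bigoplus_{t \geq 1 \text{ or } t = \infty} E_{1, t}(y_1)$; being nonzero by assumption, this forces some $E_{1, t_0}$ to be a nonzero subrepresentation of $V'$ supported on the interior tail vertices $\{x_\ell : 1 \leq \ell \leq t_0\}$. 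I would then extend $E_{1, t_0}$ by zero at every other vertex of $\Omega$ to obtain a subrepresentation of $V$, with complement given by extending the remaining summands by the full $V(w)$ at every vertex $w$ outside the support; this yields a nontrivial direct sum decomposition of $V$, contradicting indecomposability. The injectivity case is symmetric: $\ker V(e)$ identifies in the same way with $\bigoplus_{t \geq 1 \text{ or } t = \infty} E_{1, t}(y_1)$, and the same extension argument applies.

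The hardest part is this final extension, namely verifying that a subrepresentation of $V'$ supported only on interior tail vertices extends trivially to a direct summand of $V$ on all of $\Omega$. This requires the interior tail vertices $x_1, \ldots, x_{t_0}$ to carry no arrows in $\Omega$ beyond the tail arrows already built into $E_{1, t_0}$, which is the essential ``detachment'' property guaranteed by the tail hypothesis and is the bridge from the decomposition of $V'$ to a decomposition of $V$.
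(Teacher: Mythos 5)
Your argument is essentially the paper's own proof: the same auxiliary quiver $\Omega'$ and representation $V'$, the same observation that the triviality of $L^0$ (coming from the identity maps on the left half) kills every summand $E_{s,t}$ with finite $s \leq 0$, the same identification of $\im V(e)$ (resp.\ $\ker V(e)$) with the $E_{-\infty,\cdot}$ (resp.\ $E_{1,\cdot}$) part at $y_1$, and the same transfer of the resulting splitting from $V'$ back to $V$ using the fact that the vertices $x_i$, $i \geq 1$, carry no arrows beyond the tail arrows. The only structural difference is cosmetic: the paper groups all summands $E_{s,t}$ with $s \geq 1$ into a single subrepresentation $U$ and puts everything else (including $V(x)$ for $x$ off the tail) into its complement $W$, whereas you isolate one nonzero $E_{1,t_0}$; either packaging works.

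Two gaps, one of them substantive. First, your reason for discarding summands at $y_0$ only covers $C^0_{p,q}$ with $p < n_{-\infty}$, i.e.\ the $m$-indices; you also need $C^0_{n_j,q} = 0$ for finite $j \leq 0$, which follows immediately from $L^0_{n_j} = L^0_{n_{-\infty}} = V(x_0)$ (so $L^0_{<n_j} \cap R^0_q$ already exhausts $L^0_{n_j} \cap R^0_q$); you need this to know that all of $V'(y_0)$ lies in $\bigoplus_t E_{-\infty,t}(y_0)$ before computing $\im V(e)$. Second, and more importantly, you never invoke the hypothesis $V(z_0) \neq 0$, yet the statement is false without it: a simple representation concentrated at an interior tail vertex is indecomposable and its adjacent tail maps are neither surjective nor injective in the relevant sense. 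The missing beat is the nontriviality of your decomposition: indecomposability plus $E_{1,t_0} \neq 0$ forces the complementary summand to be zero, and since $z_0$ lies outside the support $\{x_1,\ldots,x_{t_0}\}$, that complement contains $V(z_0)$; so $V(z_0) \neq 0$ is exactly what produces the contradiction, as in the paper's final lines. With that sentence added, your proof closes.
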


\begin{proof}
    Without loss of generality we may assume the tail is $x_0, x_1, \ldots$ and that $e$ is an arrow between $x_0$ and $x_1$. Consider the $A_{\infty, \infty}$ quiver $\Omega'$ and its representation $V'$ associated to the quiver $\Omega$, its representation $V$, and its tail $x_0, x_1, \ldots$ defined above. Then as in Section \ref{sec: decomp of a infinity}, we have the poset filtrations $L^\ell \cap R^\ell$ of $V'(y_\ell)$ for all $\ell \in \zz$ and letting $C^0$ be any almost gradation of $L^0 \cap R^0$, the results of Section \ref{sec: decomp of a infinity} give associated subrepresentations $E_{s, t}$ for $s, t \in \zz \cup \{ \pm \infty \}$ such that $V' = \bigoplus_{s \leq t} E_{s , t}$.

    Notice that because for all $\ell < 0$, $V'(a_\ell)$ is an isomorphism, $L^0_{m_i} = 0$ for all $i \in \zz_{\geq 0}$ and $L^0_{n_j} = V'(y_0)$ for all $j \in \zz_{<0} \cup \{ -\infty \}$. Therefore we must have $C^0_{p , q} = 0$ unless $p = n_{-\infty}$, and hence if $s \leq 0$ then $E_{s , t} = 0$ unless $s = -\infty$. Therefore we have $V' = \bigoplus_{1 \leq s \leq t} E_{s , t} \oplus \bigoplus_{t} E_{-\infty, t}$. 

    If $s \geq 1$, then $E_{s , t}(y_\ell) = 0$ for $\ell \leq 0$, hence for such $s$, $E_{s , t}$ is a subrepresentation of $V$, and the family of subrepresentations $\{ E_{s , t} \mid s \geq 1 \}$ of $V$ is still independent, thus $U := \bigoplus_{1 \leq s \leq t} E_{s , t}$ is a subrepresentation of $V$. Let $W$ be the subrepresentation of $V$ defined by $W(x) = V(x)$ if $x$ is not equal to $x_0, x_1, x_2, \ldots$ and $W(x_i) = \bigoplus_{t} E_{-\infty, t}(x_i)$ for $i \in \zz_{\geq 0}$. Notice that in fact $W(x_0) = V(x_0)$. We claim that $W$ is actually a subrepresentation, i.e. that $V(a)$ is a map between $W(x)$ and $W(y)$ for all all arrows whose vertices are $x$ and $y$. If $x, y \in \{ x_1, x_2, \ldots \}$, the claim follows because $\bigoplus_{t} E_{-\infty, t}$ is a subrepresentation of $V'$ and if at most one of $x , y$ is in $\{ x_0, x_1, \ldots \}$ the claim follows trivially since in this case $W(x) = V(x)$ and $W(y) = V(y)$. Therefore we must only consider the case when $a = e$ which is an arrow between $x_0$ and $x_1$. 

    If $e$ points opposite the direction of the tail the result is trivial. since $W(x_0) = V(x_0)$. On the other hand, if $e$ points in the direction of the tail, then we have $L^1_{m_i} = 0$ for all $i \in \zz_{\geq 0}$, $L^1_{n_{-\infty}} = L^1_{n_j} = \im V(e)$ for $j \in \zz_{<0}$, and $L^1_{n_0} = V(x_1)$. Therefore in particular $\im V(e) = L^1_{n_1} = L^1_{n_{-\infty}} = \bigoplus_{t} E_{-\infty, t}(x_1)$. 

    Hence we have two subrepresentations $U, W$ of $V$ and we claim $V = U \oplus W$. Indeed for any $x \notin \{ x_1 , x_2, \ldots \}$, $U(x) = 0$ and $W(x) = V(x)$, so the result follows, and if $x \in \{ x_1, x_2, \ldots \}$ then $U(x) = \bigoplus_{1 \leq s \leq t} E_{s , t}(x)$ and $W(x) = \bigoplus_{t} E_{-\infty, t}(x)$, and we have $V(x) = V'(x) = \bigoplus_{1 \leq s \leq t} E_{s , t}(x) \oplus \bigoplus_{t} E_{-\infty, t}(x)$ as desired. 

    Now if $e$ points in the direction of the tail but is not surjective then because as above we had $\im V(e) = W(x_1)$, it must be that $U(x_1) \neq 0$. On the other hand, if $e$ points in the opposite direction of the tail and is not injective then we have $L^1_{n_{-\infty}} = L^1_{n_j} = V(x_1)$ for all $j \in \zz_{\leq 0}$ and $L^1_{m_1} = L^1_{m_i} = \ker V(e)$ for all $i \in \zz_{>0}$ while $L^1_{m_0} = 0$. Therefore $\ker V(e) = U(x_1)$ which is nonzero by hypothesis. 
    
    In either case, $U$ is not the zero representation. Since $V$ is indecomposable, it must be that $W$ is the zero representation. But since $z_0 \notin \{ x_1, x_2, \ldots \}$, this means that $V(z_0) = W(z_0) = 0$, contradiction.
\end{proof}

\begin{corollary}\label{cor:dim does not increase on tails ind}
     If $\Omega$ is a quiver, $x_0, x_1, \ldots$ is a tail of $
     \Omega$, and $V$ is a locally finite-dimensional indecomposable representation of $\Omega$ such that $V(x_0) \neq 0$ then it cannot be that $\dim V(x_n) < \dim V(x_{n + 1})$ for any $n \geq 0$.  
\end{corollary}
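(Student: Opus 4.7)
The plan is to deduce this immediately from Corollary \ref{cor: maps on tails must be inj or surj}, which is already stated for the same setup (a tail $x_0, x_1, \ldots$ of $\Omega$ and a locally finite-dimensional indecomposable $V$ with $V(x_0) \neq 0$). I would not need to revisit the $A_{\infty,\infty}$ machinery at all; the present corollary is essentially a linear algebra observation wrapped around the previous one.

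Concretely, fix any $n \geq 0$ and let $e$ be the unique arrow between $x_n$ and $x_{n+1}$ in $\Omega$ (uniqueness is guaranteed by the definition of a tail). The hypotheses of Corollary \ref{cor: maps on tails must be inj or surj} hold verbatim: $\Omega$ is a quiver, $x_0, x_1, \ldots$ is a tail, $V$ is locally finite-dimensional and indecomposable with $V(x_0) \neq 0$, and we are looking at the arrow between $x_n$ and $x_{n+1}$. That corollary then produces a dichotomy: if $e$ points $x_n \to x_{n+1}$ then $V(e)\maps V(x_n) \to V(x_{n+1})$ is surjective, so $\dim V(x_{n+1}) \leq \dim V(x_n)$; and if $e$ points $x_{n+1} \to x_n$ then $V(e)\maps V(x_{n+1}) \to V(x_n)$ is injective, so again $\dim V(x_{n+1}) \leq \dim V(x_n)$.

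In either case the claimed inequality $\dim V(x_n) < \dim V(x_{n+1})$ is impossible, which is exactly the conclusion. Note that no assumption that $V(x_n) \neq 0$ for all $n$ is needed: the previous corollary only required $V(x_0) \neq 0$, and the inequality trivially persists through any $n$ with $V(x_n) = 0$ (it in fact forces $V(x_m) = 0$ for all $m \geq n$).

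I do not expect a main obstacle; this is really a one-line bookkeeping corollary of the preceding result. The only thing to be slightly careful about is making sure that the hypothesis $V(z_0) \neq 0$ in Corollary \ref{cor: maps on tails must be inj or surj} is tracked correctly when the arrow under consideration is between $x_n$ and $x_{n+1}$ rather than between $x_0$ and $x_1$, but inspection of that corollary's statement shows it is designed exactly to handle the arrow at arbitrary position $n \geq 0$ along the tail.
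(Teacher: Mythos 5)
Your proposal is correct and is essentially the paper's own argument: the paper likewise deduces the corollary directly from Corollary \ref{cor: maps on tails must be inj or surj}, noting that $\dim V(x_n) < \dim V(x_{n+1})$ would force $V(a)$ to be non-surjective (if $a\maps x_n \to x_{n+1}$) or non-injective (if $a\maps x_{n+1} \to x_n$), a contradiction. Your care in checking that the earlier corollary applies at an arbitrary position $n$ along the tail (with only $V(x_0) \neq 0$ required) matches how that corollary is stated and used.
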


\begin{proof}
    Suppose $\dim V(x_n) < \dim V(x_{n + 1})$ for some $n \geq 0$ and let $a$ be the arrow connecting $x_n$ and $x_{n + 1}$. Then $V(a)$ is either not surjective, if $a : x_{n} \to x_{n + 1}$ or not injective, if $a: x_{n + 1} \to x_n$. This contradicts Corollary \ref{cor: maps on tails must be inj or surj}.  
\end{proof}

\begin{theorem}\label{thm: finitely branching tree quiver indecomposable implies flei}
    If $\Omega$ is a finitely branching tree quiver and $V$ is a locally finite-dimensional indecomposable representation of $\Omega$ then $V$ is FLEI. 
\end{theorem}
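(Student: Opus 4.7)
The plan is to exploit the fact that in a tree quiver, every infinite journey is automatically a tail, which reduces the problem to a journey-by-journey analysis using Corollaries \ref{cor:dim does not increase on tails ind} and \ref{cor: maps on tails must be inj or surj}. I would begin by observing that in a tree $\Omega$ the two conditions required of a tail, namely that there is exactly one arrow between $x_i$ and $x_{i+1}$ and no arrows between $x_i$ and $x_j$ for $|i-j| > 1$, hold automatically for any infinite journey: an additional arrow, combined with the appropriate finite stretch of the journey, would produce a cycle and contradict the tree hypothesis. Hence every infinite journey in $\Omega$ is already a tail of $\Omega$, and so is every one-sided infinite suffix of such a journey.

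Using the finitely branching hypothesis, I would write $\Omega = p_1 \cup \cdots \cup p_k$ as a union of finitely many journeys. Since every arrow of $\Omega$ lies on some $p_i$, it suffices to show that each $p_i$ carries only finitely many arrows on which $V$ fails to be an isomorphism. Finite journeys contribute only finitely many arrows. For an infinite $p_i$ with vertex sequence $v_0, v_1, v_2, \ldots$, if $V(v_n) = 0$ for every $n$ then every arrow of $p_i$ is the zero map between zero spaces and is trivially an isomorphism. Otherwise I would let $N$ be the smallest index with $V(v_N) \neq 0$; the suffix $(v_N, v_{N+1}, \ldots)$ is a tail of $\Omega$ with $V(v_N) \neq 0$, so Corollary \ref{cor:dim does not increase on tails ind} yields a non-increasing chain of natural numbers $\dim V(v_N) \geq \dim V(v_{N+1}) \geq \cdots$, which must stabilize at some $M \geq N$. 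For every $n \geq M$, Corollary \ref{cor: maps on tails must be inj or surj} forces the arrow between $v_n$ and $v_{n+1}$ to be either injective or surjective, and equality of finite source and target dimensions then upgrades it to an isomorphism. Therefore only finitely many arrows on $p_i$, namely those incident to a vertex $v_n$ with $n < M$, can fail to be isomorphisms.

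Summing the bounds across the $k$ journeys yields finitely many non-isomorphism arrows in $\Omega$, which is the definition of FLEI. The conceptual heart of the argument is the automatic tail property for infinite journeys in a tree; after this, the two previous corollaries do essentially all of the work. The only mildly delicate point is the possibility that $V(v_0) = 0$, which is dispatched cleanly by shifting the starting vertex of the tail to the first index $v_N$ at which $V$ becomes nonzero, so I do not anticipate any substantial obstacle.
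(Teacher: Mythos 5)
Your proof is correct and takes essentially the same route as the paper: use the finitely-branching hypothesis to reduce to finitely many infinite directions, then apply Corollaries \ref{cor:dim does not increase on tails ind} and \ref{cor: maps on tails must be inj or surj} to get non-increasing, eventually constant dimensions and injective/surjective maps along each tail, hence isomorphisms beyond a finite stage. The only real difference is bookkeeping: where the paper decomposes $\Omega$ into a finite quiver plus finitely many tails and cites \cite[Lemma 4.2]{gallup2023gabriel} to push nonvanishing to the base vertex of a tail, you work journey-by-journey and truncate at the first vertex where $V$ is nonzero (noting that in a tree the suffix is still a tail in the sense defined here), which meets the hypotheses of the corollaries equally well.
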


\begin{proof}
Since $\Omega$ is a finitely branching tree quiver, it is the union of a finite quiver and finitely many infinite tails. If on a tail $x_0, x_1, \ldots$ we have $V(x_n) \neq 0$ for some $n$, then by \cite[Lemma 4.2]{gallup2023gabriel} it must be that $V(x_0) \neq 0$. Since $V$ is indecomposable, by Corollary \ref{cor:dim does not increase on tails ind} the dimension can never increase along this tail, and since $V$ is locally finite-dimensional, the dimension must stabilize. However by Corollary \ref{cor: maps on tails must be inj or surj}, all maps on this tail are either injective or surjective, so after the dimensions stabilize all maps must be isomorphisms. Since there are only finitely many such tails, for all but finitely many arrows $a \in \Omega_1$ we have that $V(a)$ is an isomorphism, and hence $V$ is FLEI by definition.  
\end{proof}


\section{Infinite Gabriel's Theorem for Locally Finite-Dimensional Representations}\label{sec: proof}

\begin{theorem}[Locally Finite-Dimensional Infinite Gabriel's Thm.]
Let $\Omega$ be a connected quiver. The category $\operatorname{rep}(\Omega)$ has unique representation type if and only if $\Omega$ is a generalized ADE Dynkin quiver (see Figure \ref{fig: generalized ade quivers}) and in this case, taking dimension vectors gives a bijection between the set of isomorphism classes of indecomposable representations and the positive roots of $\Omega$ (as defined in \cite{gallup2023gabriel}). 
\end{theorem}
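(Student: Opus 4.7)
The plan is to prove both directions separately, leveraging the structural results of Sections \ref{sec: decomp of a infinity} and \ref{sec: tails} for the forward direction and reducing to classical Gabriel's theorem on finite subquivers for the converse.

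\emph{If direction.} Assume $\Omega$ is a generalized ADE Dynkin quiver. I would split into three cases. First, if $\Omega$ is of finite ADE type, the result is immediate from classical Gabriel's theorem \cite{gabriel1972}. Second, if $\Omega$ is $A_\infty$ or $A_{\infty,\infty}$, I would apply the final Corollary of Section \ref{sec: decomp of a infinity}, which identifies the locally finite-dimensional indecomposables as thin representations in bijection with the connected subquivers of $\Omega$ via their supports. The dimension vector of such a thin representation is the indicator function of its support, which coincides with the description of positive roots of these quivers from \cite{gallup2023gabriel}; both unique representation type and the root bijection follow at once. Third, if $\Omega$ is of type $D_\infty$, I would use that $\Omega$ is a finitely branching tree quiver, so by Theorem \ref{thm: finitely branching tree quiver indecomposable implies flei} every locally finite-dimensional indecomposable $V$ is FLEI. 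Thus beyond some point $N$ on the tail every arrow becomes an isomorphism under $V$, so $V$ is determined by its restriction to the finite $D_n$-subquiver formed from the first $N{+}1$ tail vertices together with the branching end, extended along the remaining tail by the (up to isomorphism) canonical sequence of identity maps on the stabilized vector space. Classical Gabriel's theorem applied to each such finite $D_n$ then yields both unique representation type and the bijection with positive roots of $D_\infty$, after checking compatibility between the finite Tits forms and the infinite Euler-Tits form of \cite{gallup2023gabriel} as $N$ grows.

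\emph{Only if direction.} Assume $\Omega$ is connected but not a generalized ADE Dynkin quiver. My strategy is to exhibit two non-isomorphic indecomposable locally finite-dimensional representations with the same dimension vector. The key combinatorial input is that $\Omega$ must then contain a finite connected subquiver $\Omega'$ whose underlying graph is not an ADE Dynkin diagram: if $\Omega$ contains a cycle it contains some $\widetilde{A}_n$; if $\Omega$ is a tree with a vertex of degree $\geq 4$ it contains $\widetilde{D}_4$; if $\Omega$ is a tree with two or more branching vertices then any finite subtree containing both furnishes some $\widetilde{D}_n$ or some $E_n$ with $n \geq 9$; and if $\Omega$ is a tree with a single branch point but arms too long to fit inside $E_8$, a finite truncation gives an $E_n$ with $n \geq 9$. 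By classical Gabriel's theorem, which (as noted in the introduction) gives non-unique representation type on any finite non-ADE quiver, there exist two non-isomorphic indecomposables $V_1, V_2 \in \operatorname{rep}(\Omega')$ with the same dimension vector. Extending each by zero on $\Omega_0 \smallsetminus \Omega'_0$ produces two non-isomorphic indecomposable locally finite-dimensional representations of $\Omega$ with identical dimension vectors, contradicting unique representation type.

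\emph{Main obstacle.} The most delicate step is the $D_\infty$ case in the ``if'' direction: the FLEI reduction truncates every indecomposable to a finite $D_n$-representation, but obtaining the clean bijection with positive roots of $D_\infty$ requires verifying that the canonical constant extension along the tail is compatible with the infinite Euler-Tits form of \cite{gallup2023gabriel} and that dimension vectors of indecomposables extend unambiguously as $N$ grows (in particular, no two indecomposables with different stabilization points on the tail share a dimension vector). A secondary but largely mechanical issue is the combinatorial lemma underlying the converse direction; this is a finite graph-theoretic classification and I expect it to be routine once enumerated.
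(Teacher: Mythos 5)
Your overall architecture matches the paper's: the converse direction (find a finite connected subquiver that is not ADE, take two non-isomorphic finite-dimensional indecomposables with the same dimension vector, extend by zero) is exactly the paper's argument, and in the forward direction you invoke Theorem \ref{thm: finitely branching tree quiver indecomposable implies flei} to reduce to FLEI indecomposables, as the paper does. The divergence is in how the infinite cases are closed. The paper finishes by citing the classification of FLEI indecomposables of $A_{\infty,\infty}$ and $D_\infty$ in \cite[Section 3]{gallup2023gabriel} together with \cite[Proposition 7.4]{gallup2023gabriel} (noting that the ``eventually outward'' hypothesis there is not actually used), which gives the bijection with positive roots in one stroke. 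You instead re-derive it: for type $A$ via the final corollary of Section \ref{sec: decomp of a infinity} (note that corollary is stated for $A_{\infty,\infty}$; for $A_\infty$ you need the small additional step of extending by zero to an $A_{\infty,\infty}$ quiver, or of rerunning the argument, and you still need the identification of indicator functions of connected subquivers with the positive roots, which you assert by appeal to \cite{gallup2023gabriel}), and for $D_\infty$ via truncation past the point where all tail arrows act by isomorphisms, followed by classical Gabriel on finite $D_n$.

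The one genuine gap is the step you flag yourself. The truncation argument does give unique representation type for $D_\infty$ (two indecomposables with equal dimension vector restrict to indecomposables on a common large $D_n$ with equal dimension vectors, hence are isomorphic there, and the isomorphism propagates along the tail isomorphisms), but the bijection with positive roots is left as an unexecuted ``compatibility check'': you must actually verify that the positive roots of the infinite Euler--Tits form of \cite{gallup2023gabriel} are exactly the eventually-constant tail extensions of positive roots of the finite $D_n$'s, that each such root arises from exactly one indecomposable, and that the answer is independent of the truncation point. That verification is precisely the content the paper obtains by citing \cite[Proposition 7.4]{gallup2023gabriel}; without either that citation or the computation, your proof establishes unique representation type but not the stated bijection. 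Your enumeration in the converse direction (cycle, degree $\geq 4$ vertex, two branch points, one branch point with overlong arms) is essentially the right case analysis, though it should be stated so as to also cover configurations such as three arms of length $2$ (the $\widetilde{E}_6$ shape), which your phrasing ``$E_n$ with $n \geq 9$'' does not literally capture; the paper handles this by citation to \cite{gallup2023gabriel} as well.
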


\begin{proof}
Suppose that $\Omega$ is a generalized ADE Dynkin diagram. If $\Omega$ has underlying graph $A_n, D_n, E_6, E_7,$ or $E_8$ then the desired conclusion is obtained by appealing to \cite{gabriel1972} and \cite{ringel2016} as in the proof of \cite[Theorem 2]{gallup2023gabriel}. If $\Omega$ has underlying graph $A_{\infty}$, $A_{\infty, \infty}$, or $D_\infty$, then $\Omega$ is in particular a finitely branching tree quiver, hence by Theorem \ref{thm: finitely branching tree quiver indecomposable implies flei}, every locally finite-dimensional indecomposable representation of $\Omega$ is FLEI. In \cite[Section 3]{gallup2023gabriel}, we classified the FLEI indecomposables of $A_{\infty, \infty}$ and $D_{\infty}$ and showed in \cite[Proposition 7.4]{gallup2023gabriel}\footnote{Note that although ``eventually outward'' is a hypothesis of Proposition 7.4, the condition is not necessary and indeed not even used in the proof.} that taking dimension vectors gives a bijection between these indecomposables and the positive roots of $\Omega$ in this case.

Conversely, if the underlying graph of $\operatorname{rep}(\Omega)$ is not a generalized ADE Dykin diagram, then as in the proof of \cite[Theorem 2]{gallup2023gabriel}, we note that $\Omega$ contains a finite subquiver $\Omega'$ which is not an ADE Dynkin diagram, and therefore has two non-isomorphic indecomposable representations with the same dimension vector, which we emphasize here can be taken to be \emph{finite-dimensional}. Extending these representations by zero outside of $\Omega'$ gives two non-isomorphic locally finite-dimensional indecomposable representations of $\Omega$ with the same dimension vector.  
\end{proof}

\section{Future Work}

In \cite{gallup2023gabriel} we showed that when $\Omega$ is an eventually outward finitely branching tree quiver, the category $\operatorname{Rep}(\Omega)$ is infinite Krull-Schmidt and all indecomposables are FLEI. In \cite{bautista-liu-paquette2011} it was proved that for any quiver $\Omega$, the category $\operatorname{rep}(\Omega)$ is infinite Krull-Schmidt, and in Theorem \ref{thm: finitely branching tree quiver indecomposable implies flei} we showed that if $\Omega$ is a finitely branching tree quiver then all indecomposables are also FLEI. 

This leaves the question: can we classify the indecomposable representations of finitely branching tree quivers which are not eventually outward? As we showed, any such indecomposable which is locally finite-dimensional is automatically FLEI, but are there not locally finite-dimensional indecomposables in this case? This question is even interesting in the case when the underlying graph of $\Omega$ is $A_{\infty , \infty}$, and is made potentially more difficult by the fact that $\operatorname{Rep}(\Omega)$ is not infinite Krull-Schmidt in this case, as we showed in \cite[Section 8]{gallup2024decompositions}.

\end{document}